\numberwithin{equation}{section}
\definecolor{Red}{cmyk}{0,1,1,0}
\newtheorem{theorem}{Theorem}[section]
\newtheorem{corollary}[theorem]{Corollary}
\newtheorem{lemma}[theorem]{Lemma}
\newtheorem{proposition}[theorem]{Proposition}
\newtheorem{remark}[theorem]{Remark}
\crefname{equation}{}{}
\Crefname{equation}{}{}
\newcommand{\bb}[1]{{\mathbb #1}}
\def\ubar#1{\underline{\sbox\tw@{$#1$}\dp\tw@\z@\box\tw@}}
\renewcommand\P[2]{\mathbb{P}_{#1}^{{\mathrm{BES}(#2)}}}
\newcommand\PD[3]{\mathbb{P}_{#1}^{{\mathrm{BES}(#3)},#2}}
\newcommand\PB[4]{\mathbb{P}_{#1,#2,#3}^{{\mathrm{BES}(#4)},\mathrm{br}}}
\newcommand\EB[4]{\mathbb{E}_{#1,#2,#3}^{{\mathrm{BES}(#4)},\mathrm{br}}}
\title[Sharp Barrier Estimates for Bessel bridges]{Sharp barrier estimates for Bessel  bridges}
\date{\today}
\author{Leandro Chiarini}
\author{Ellen Powell}
\address{\vspace{-0.5em}Instituto de Matemática e Estatística, Universidade de São Paulo, São Paulo, Brazil}
\address{\vspace{0.5em}Department of Mathematical Sciences, Durham University, Durham, UK}
\email{lchiarini@usp.br}
\email{ellen.g.powell@durham.ac.uk}
\begin{document}
\maketitle

\begin{abstract}
	In this article, we derive precise estimates for the probability that a Bessel bridge  of dimension $d \ge 0$ and end points $x$ and $a+bT-j$ stays below the linear barrier
	$a + bt$ for all $t \in [0,T]$.
	We identify the leading order term as well as the asymptotic error for this probability as $T\to \infty$, depending on $a,b,j,x$.
	We also derive the behaviour of such leading term as we allow $a,j\to \infty$, and obtain precise bounds for all error terms.
	Finally, we establish a complementary result where the linear barrier is perturbed by a {small} concave function.
\end{abstract}

\section{Introduction}

Ballot problems for random walks and their continuum counterparts, \emph{barrier problems}, have a long and rich history in probability theory, serving as foundational tools for understanding constrained stochastic processes.
Applications range from combinatorics and queuing theory to the fine analysis of extremal events in branching processes and random media \cite{banderier2019kernel,takacs1989ballots,bramson1978maximal,sznitman2013brownian}.
In this article, we are interested in deriving precise estimates for the barrier problem associated with a
\emph{Bessel bridge} of dimension $d\ge 0$.

More precisely, we aim to estimate the probability of the event
\begin{equation*}
	\Omega_{a,b,T}
	:=
	\left\{X_t\le a+bt \quad \forall t\in[0,T] \right\}
\end{equation*}
for $a,b>0$, conditionally on the value of $X_T\in (0,a+bT)$.
Since the density of $X_T$ has an explicit and rather simple expression, one can easily deduce from our results the equivalent estimate barrier probabilities for the unconstrained process.

We note that these questions have been extensively studied in the setting of Brownian motion and random walks (e.g. \cite{Addario-Berry_Reed_2008,doob1949heuristic,Durbin_1971}), and one can obtain crude results for Bessel process using change of measure techniques.
However, the level of precision of our results mean that we must work directly with the Bessel processes and make use of special symmetries (\cref{lem:linear-to-flat-barrier}), distributional identities such as \eqref{eq:defEn}, and fine asymptotic analysis (\cref{lem:step2,lem:step3}).

We write $\PB{x}{y}{T}{d}$ for the law of a $d$-dimensional Bessel bridge from $x$ to $y$ in time $T$, see \cref{sec:pre}, and our main result is as follows.

\begin{theorem} \label{thm:linear-bessel}
	Fix $d\ge 0$ and let $\nu=d/2-1$.
	Then for $a,b,j,T>0$, and $x\in [0,a)$, we have
	\begin{equation*}
		\PB{x}{a+bT-j}{T}{d}(\Omega_{a,b,T})
		:=
		\frac{j(a-x)}{T}
		\left(\mathcal{P}_{a,b,x}+\mathcal{E}_{a,b,x,j}(T)\right)+\frac{j}{T}\widehat{\mathcal{P}}_b(x)
	\end{equation*}
	where $\mathcal{E}_{a,b,x,j}(T)\to 0$ pointwise as $T\to \infty$, $\mathcal{P}_{a,b,x}\to 2$ pointwise as $a\to \infty$, and for $I_{\alpha}(\cdot)$ the modified Bessel function of the first kind,
	\begin{equation*}
		\widehat{\mathcal{P}}_b(x)
		:=
		-x\left(\frac{I_{|\nu|+1}(bx)}{I_{|\nu|}(bx)}-1\right)  \to \frac{2|\nu|+1}{2b} \text{ as } x\to \infty.
	\end{equation*}
	Moreover, for any compact $K\subset (0,\infty)$ there exists $C<\infty$ depending only on $K$ and $d$ such that for all $a > \max\{x,1\}$ and $b\in K$
	\begin{equation*}
		|\mathcal{E}_{a,b,x,j}(T)|\,
		\le \, C\frac{\log^2(T)}{T}\max\left\{\frac{j a^2 }{a-x}, \frac{j a^{5/2} }{(a-x)^4}\right\}
		\text{ and }
		|\mathcal{P}_{a,b,x}-2|\, \le\,  \frac{C}{\sqrt{a}}\frac{1}{\min\{a-x,1\}}.
	\end{equation*}
\end{theorem}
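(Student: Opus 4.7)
The plan is to reduce the linear-barrier problem to a flat-barrier problem, then extract the $T\to\infty$ asymptotics via a careful expansion. First I would apply \cref{lem:linear-to-flat-barrier} to convert the event $\Omega_{a,b,T}$ under $\PB{x}{a+bT-j}{T}{d}$ into an event of the form $\{\tilde X_t\le a\text{ for all }t\in[0,T]\}$ under the law of a Bessel-type bridge from $x$ to $a-j$, at the cost of an explicit Radon--Nikodym factor. The natural ``special symmetry'' is the Bessel analogue of the Cameron--Martin transform: the drift $bt$ is absorbed via an $h$-transform with $h(y)\propto y^{-\nu}I_{|\nu|}(by)$, and this is exactly what produces modified Bessel functions in the final expressions.

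Next, with the flat-barrier formulation in hand, I would invoke the distributional identity \eqref{eq:defEn} to write the resulting barrier probability in a form amenable to asymptotic analysis—either a spectral sum over zeros of $J_{|\nu|}$ for the Dirichlet-killed Bessel semigroup on $[0,a]$, or an explicit integral/first-passage decomposition. Then the fine asymptotic lemmas \cref{lem:step2,lem:step3} should extract the two leading contributions: the ``geometric'' term $\tfrac{j(a-x)}{T}\mathcal{P}_{a,b,x}$ corresponding to the classical Brownian ballot factor (which is why $\mathcal{P}_{a,b,x}\to 2$), and the ``boundary'' term $\tfrac{j}{T}\widehat{\mathcal{P}}_b(x)$ arising from the Radon--Nikodym weighting at the initial time. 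The limit $\widehat{\mathcal{P}}_b(x)\to(2|\nu|+1)/(2b)$ as $x\to\infty$ should then follow directly from the classical expansion $I_{|\nu|+1}(z)/I_{|\nu|}(z)=1-(2|\nu|+1)/(2z)+O(z^{-2})$; similarly, $\mathcal{P}_{a,b,x}\to 2$ as $a\to\infty$ with the claimed rate $C/(\sqrt{a}\min\{a-x,1\})$ should come from the large-argument Bessel asymptotics applied to the explicit formula produced in the previous step.

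The hard part will be the bookkeeping in the application of \cref{lem:step2,lem:step3}: separating cleanly the two $O(1/T)$ contributions with \emph{different} $a$-dependence, while producing an error $\mathcal{E}_{a,b,x,j}(T)$ that is uniform on compacts in $b$ and sharp in the regime where $a-x$ is small. The factor $a^{5/2}/(a-x)^4$ in the stated error bound is a clear sign that the analysis genuinely degenerates as $x$ approaches $a$ (the bridge starts essentially on the barrier), so one should expect to Taylor-expand the Radon--Nikodym density and the Bessel ratios near the endpoint very carefully, with several intermediate expansions each contributing a factor of $\log(T)$ to get the final $\log^2(T)/T$ rate. Establishing uniformity in $b$ over compacts $K\subset(0,\infty)$ should reduce, via monotonicity and continuity of the Bessel function ratios, to pointwise asymptotics together with explicit bounds on $I_{|\nu|+1}/I_{|\nu|}$ on $K$.
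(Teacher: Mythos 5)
Your high-level outline (reduce to a flat barrier, feed in the hitting-time decomposition, then run Bessel asymptotics) does match the shape of the paper's argument, and your last paragraph about $I_{|\nu|+1}(z)/I_{|\nu|}(z)=1-(2|\nu|+1)/(2z)+O(z^{-2})$ is exactly the final step the paper carries out once \cref{lem:linear-to-flat-barrier,lem:step2,lem:step3} are in hand. However, there is a concrete gap in how you describe the key reduction, and it matters because the subsequent asymptotic regime is completely different from what you set up.

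You claim that \cref{lem:linear-to-flat-barrier} turns the problem into a flat barrier at height $a$ for a bridge from $x$ to $a-j$ over $[0,T]$, ``at the cost of an explicit Radon--Nikodym factor,'' via a Cameron--Martin/$h$-transform with $h(y)\propto y^{-\nu}I_{|\nu|}(by)$. That is not what the lemma does, and the mechanism you propose would not give the stated identity. The actual reduction is an \emph{exact} distributional identity — no change of measure, no Radon--Nikodym weight — obtained by composing time inversion (property (I)), scaling (property (S)) and time reversal (property (R)). It produces a flat barrier at height $1$ for a $\mathsf d$-dimensional Bessel bridge from $1-j/(a+bT)$ to $x/a$ over the time interval $[0,\Gamma^{a,b}_T]$ with $\Gamma^{a,b}_T=T/(a(a+bT))\to 1/(ab)$. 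In particular the time horizon \emph{shrinks} to a bounded interval, the starting point of the bridge approaches the barrier, and the small parameter driving the asymptotics is $\delta=j/(a+bT)\to 0$, not $T\to\infty$ for a fixed-height bridge over a growing interval. Consequently your attribution of the $\widehat{\mathcal P}_b(x)$ term to ``the Radon--Nikodym weighting at the initial time'' is also off: in the paper it arises from Taylor-expanding the transition density ratio $p^{(\mathsf d)}(1,y,u)/p^{(\mathsf d)}(1-\delta,y,u)$ in $\delta$ (\cref{lem:a1}), which is where $I_{|\nu|+1}/I_{|\nu|}$ first enters. If you tried to run the argument with your $h$-transform picture you would not recover the scale separation between the $j(a-x)/T$ and $j/T$ contributions, nor the $\delta\log^2\delta$ error structure that gives the $\log^2(T)/T$ rate.

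Your use of \eqref{eq:defEn} is in the right spirit (the hitting time of level $1$ from $1-\delta$ as a sum of Bernoulli-thinned exponentials with rates $j_{|\nu|,n}^2/2$), but note the paper does not pass through a spectral expansion of a Dirichlet-killed semigroup on $[0,a]$ — after the reduction, everything happens at the fixed level $1$, with $\delta$ and $y=x/a$ as the expansion variables.
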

\noindent $\mathcal{P}_{a,b,x}$ depends only on $a,b$ and $x$, and is defined in terms of an infinite sum in \eqref{eq:value-constant-P}.

\begin{remark}
	\emph{The error bound in \cref{thm:linear-bessel} may not be optimal, but it cannot vanish, for example, if $a,j,x=O(T)$ which would lead to a barrier probability of order $1$.}
	\emph{The reason for separating the terms $\mathcal{P}_{a,b,x}$ and $\widehat{\mathcal{P}}_b(x)$ is to distinguish between the limiting behaviour when $a,a-x\to \infty$ and when $a\to \infty$ but $a-x$ does not. In the first case only $\mathcal{P}_{a,b,x}$ is relevant, but in the latter, $\widehat{\mathcal{P}}_{b}(x)$ plays a role. The bound on $\mathcal{E}_{a,b,x,j}(T)$ also becomes much simpler in the case when $a-x=O(a)$.}
\end{remark}

In addition to \cref{thm:linear-bessel}, we have a very general non-sharp upper bound.
\begin{proposition}\label{prop:non-sharp}
	For $d\ge 0$ and compact $K\subset (0,\infty)$ there exists $C_{K,d}<\infty$ such that for all $T>0$ and $a,j \in [0,a+bT), x \in [0,a]$
	\begin{equation}\label{eq:thm-non-sharp-bound}
		\sup_{b\in K}
		\PB{x}{a+bT-j}{T}{d}(\Omega_{a,b,T})
		\le C_{K,d}
			{
				\frac{j((a-x)+1)}{T}.
			}
	\end{equation}
\end{proposition}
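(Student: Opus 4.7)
My plan is to first apply \cref{lem:linear-to-flat-barrier} to convert the event $\Omega_{a,b,T}$ into the event that a related Bessel-type bridge (over the same time horizon, with endpoints involving $x$ and $a-j$) stays below the \emph{constant} barrier $a$. The change-of-variable density that this transformation produces is continuous in $b$ and hence bounded uniformly for $b$ in any compact $K \subset (0,\infty)$; this is the source of the constant $C_{K,d}$.

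Next, for the flat-barrier probability, I would model the bound on the classical Brownian bridge reflection identity
\[
\mathbb{P}\bigl(\max_{0\le t\le T} X_t \le a \,\big|\, X_0 = u,\, X_T = v\bigr) = 1 - \exp\bigl(-2(a-u)(a-v)/T\bigr) \le \frac{2(a-u)(a-v)}{T},
\]
and transfer it to the Bessel setting. This can be done using the distributional identity \eqref{eq:defEn} together with the explicit expression of the Bessel bridge transition density in terms of the modified Bessel function $I_\nu$, which reduces the flat-barrier probability to a ratio of killed to unrestricted transition densities that can then be bounded directly. In the regime $a - x \gtrsim 1$ this yields the $\tfrac{(a-x)j}{T}$ contribution to the claimed bound.

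The extra ``$+1$'' in $(a-x)+1$ is there to handle the regime $a - x \lesssim 1$, where the purely Brownian-type estimate $\tfrac{(a-x)j}{T}$ is too weak to absorb the Bessel corrections near the barrier. Here I would apply the Markov property of the bridge at an intermediate time $t_0$ of order $1$: after $t_0$ the position has a density bounded uniformly in the starting point $x$, and the Brownian comparison applied on $[t_0, T]$ then yields the required $\tfrac{j}{T}$ bound.

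The main technical challenge will be obtaining the Bessel-to-Brownian comparison uniformly in all the parameters, particularly as $x \to 0$ or as $x \to a$, because of the singular factors of the form $(y/x)^{\nu}$ in the Bessel transition density. I expect this to be controlled by combining the small- and large-argument asymptotics of $I_\nu$ with the time-reversal symmetry of the bridge (which swaps $x$ and the endpoint) wherever the roles of the two endpoints need to be exchanged to apply the right asymptotic regime.
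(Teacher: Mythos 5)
Your high-level plan (reduce the linear barrier to a flat one, then exploit the hitting-time decomposition \eqref{eq:defEn}) is pointed in the right direction, but your reading of \cref{lem:linear-to-flat-barrier} is off. That lemma is an \emph{exact} identity between bridge probabilities: it converts the event to a $\mathsf{d}$-dimensional Bessel bridge over the \emph{rescaled} time horizon $\Gamma^{a,b}_T=T/(a(a+bT))$ (not $[0,T]$), with endpoints $1-j/(a+bT)$ and $x/a$ (not $x$ and $a-j$), staying below the constant barrier $1$ (not $a$). There is no ``change-of-variable density'' produced by this step — the $K$-dependence of the constant comes entirely from the subsequent analysis of the flat-barrier problem, where $u=\Gamma^{a,b}_T\approx 1/(ab)$ and $b\in K$ controls the time horizon. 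Getting these parameters wrong matters, because the later estimates are written in the $(\delta,y,u)$ variables $\delta=j/(a+bT)$, $y=x/a$, $u=\Gamma^{a,b}_T$, and the target bound $j(a-x)/T$ is exactly $\delta(1-y)/u$ in those variables.

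Your treatment of the ``$+1$'' is also not how the paper argues, and as stated it has a gap. In the paper, the non-sharp bound comes from the final claim of \cref{cor:app}: $\PB{1-\delta}{y}{u}{\mathsf{d}}(X_t\le 1\,\forall t)\lesssim \delta(1-y)/u$, which already gives $\lesssim j(a-x)/T$ with no need for a separate $a-x\lesssim 1$ case (the ``$+1$'' in the statement is merely a weaker form). That bound is obtained by first proving the auxiliary estimate \eqref{eq:upintermediate}, $\lesssim\frac{\delta}{\sqrt{v}}(1+(1-z)^2/v)$, via the hitting-time representation $\PB{1-\delta}{y}{u}{\mathsf{d}}(\cdot)=1-\frac{p^{(\mathsf{d})}(1,y,u)}{p^{(\mathsf{d})}(1-\delta,y,u)}\mathbb{E}[f_{u,y}(\tau^{(\delta)})\mathds{1}_{\tau^{(\delta)}\le u}]$, and then conditioning at time $u/2$ in the \emph{rescaled} clock (order $1/(ab)$, not order $1$ in the original time scale) and applying \eqref{eq:upintermediate} to both halves by time reversal. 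Your alternative — split at $t_0=O(1)$ in original time and appeal to a ``Brownian comparison'' on $[t_0,T]$ — is a genuinely different route, but you give no justification for the comparison itself: it is not clear that the $d$-dimensional Bessel bridge barrier probability is dominated by the Brownian bridge reflection formula, uniformly in $d\ge 0$ and in the endpoints, and the paper deliberately avoids relying on any such comparison, working instead directly with the Bessel hitting-time series \eqref{eq:defEn} and the density ratio of \cref{lem:a1}. Finally, ``can then be bounded directly'' hides the bulk of the work: bounding $\mathbb{E}[f_{u,y}(\tau^{(\delta)})\mathds{1}_{\tau^{(\delta)}\le u}]$ requires the exponential-sum decomposition of $\tau^{(\delta)}$, sup-norm bounds on $f_{u,y}$ and $f'_{u,y}$, and the expansion of $p^{(\mathsf{d})}(1,y,u)/p^{(\mathsf{d})}(1-\delta,y,u)$ from \cref{lem:a1}, none of which your sketch addresses.
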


Finally, we establish an upper bound for the analogous probability when we add a {small} concave function to the linear barrier.

\begin{theorem}\label{thm:concave-bessel}
	Let $d \ge 0$, $a,b,x,j$ be as in Theorem \ref{thm:linear-bessel}, and let $h:[0,\infty) \longrightarrow [0,\infty)$ be an increasing concave function with $h(0)=0$ satisfying $h(t) \le ct^{\gamma}$ for some $\gamma<1/6$. Then letting $t_T:= \min\{ t,  T-t\}$ and $
		\Omega_{a,b,T}^h
		:=
		\left\{X_t\le a+bt+h(t_T) \quad \forall t\in[0,T] \right\}$
	we have that
	\begin{align*}
		\limsup_{T\to \infty} T\PB{x}{a+bT-j}{T}{d}(\Omega_{a,b,T}^h)
		\le
		j(a-x)\left(\mathcal{P}_{a,b,x}+\frac{\widehat{\mathcal{P}}_b(x)}{a-x}\right)
		(1+\delta_{a,b,j,x}),
	\end{align*}
	where $\delta_{a,b,j,x}<\infty$ for all $a,b,j,x$.
	Furthermore, and for fixed $x,b$ we have that  $\delta_{a,b,j,x} \to 0$ as $a \to \infty$, $j \to \infty$.
\end{theorem}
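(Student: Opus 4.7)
The plan is to reduce the concave-perturbation problem to the linear case of Theorem \ref{thm:linear-bessel} via a Markov decomposition, using the smallness of $h$ compared to the typical fluctuation scale of the bridge. Fix a scale $s = s(T)$ with $s \to \infty$, $s/T \to 0$ and $h(s) = o(\sqrt{s})$ (possible since $\gamma < 1/2$; for concreteness take $s = T^{\beta}$ with a suitable small $\beta > 0$), and apply the Markov property of the bridge at times $s$ and $T-s$ to write
\begin{equation*}
\PB{x}{a+bT-j}{T}{d}(\Omega^h_{a,b,T}) = \iint q(u,v)\, p_1(u)\, p_2(u,v)\, p_3(v)\, du\, dv,
\end{equation*}
where $q$ is the joint density of $(X_s, X_{T-s})$ under the unrestricted bridge, and $p_1, p_2, p_3$ are the conditional probabilities that the respective sub-bridges stay below the restriction of $a+bt+h(t_T)$ to their intervals.

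On the two end pieces, monotonicity of $h$ gives $h(t_T) \le h(s)$, so the barrier is majorised by the linear function $a + h(s) + bt$, and Proposition \ref{prop:non-sharp} yields crude upper bounds on $p_1(u)$ and $p_3(v)$. On the middle piece I would exploit the subadditivity $h(s+r) \le h(s) + h(r)$, which holds for concave $h$ with $h(0)=0$, to conclude that $h(t_T) \le h(s) + h(t_T - s) \le h(s) + h(T/2)$ for all $t \in [s, T-s]$. Thus the middle barrier is linear up to a shift of size $h(s) + h(T/2) \sim T^{\gamma}$, and Theorem \ref{thm:linear-bessel} applies to the middle bridge from $u$ to $v$ of length $T - 2s$ with effective parameters $a_m := a + bs + h(s) + h(T/2)$ and $j_m := a_m + b(T - 2s) - v$, giving a sharp asymptotic for $p_2(u,v)$.

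Substituting these bounds into the integral, the density $q(u,v)$ localises $u$ near $x$ and $v$ near $a + bT - j$ at scale $\sqrt{s}$; the leading contribution reassembles $\tfrac{1}{T} j(a-x)(\mathcal{P}_{a,b,x} + \widehat{\mathcal{P}}_b(x)/(a-x))$, while the correction decomposes into ratios such as $h(T/2)/(a-x)$, $h(T/2)/j$ and $\sqrt{s}/(a-x)$. \textbf{The main obstacle} is balancing these errors: the $a^{5/2}/(a-x)^4$ factor appearing in the error bound of Theorem \ref{thm:linear-bessel}, combined with the shift $h(T/2) \sim T^{\gamma}$ and the fluctuation scale $T^{\beta/2}$, produces constraints of the form $T^{c_1 \gamma - c_2 \beta} \to 0$; the threshold $\gamma < 1/6$ is precisely what enables a choice of $\beta$ for which every correction collapses into a single factor $1 + \delta_{a,b,j,x}$ that is finite for fixed parameters and tends to $0$ as $a, j \to \infty$. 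The most delicate bookkeeping will be to verify that $\mathcal{P}_{a_m, b, u} \to \mathcal{P}_{a,b,x}$ and $\widehat{\mathcal{P}}_b(u) \to \widehat{\mathcal{P}}_b(x)$ with sufficient uniformity in $u$ after integration against $q$, so that the prefactor on the right-hand side of Theorem \ref{thm:concave-bessel} is recovered exactly.
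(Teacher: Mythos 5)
Your proposed decomposition does not work, because the majorisation of the middle-piece barrier by a constant shift $h(T/2)$ is fatally lossy: it changes the leading order, not just the error terms. Concretely, after bounding $h(t_T)\le h(T/2)$ on $[s,T-s]$ and applying \cref{thm:linear-bessel} to the middle bridge, the effective parameters are $a_m-u \approx (a-x)+h(s)+h(T/2)$ and $j_m\approx j+h(s)+h(T/2)$, both diverging as $T\to\infty$. The leading term from \cref{thm:linear-bessel} for the middle piece is then of order
\begin{equation*}
	p_2(u,v) \;\approx\; \frac{j_m\,(a_m-u)}{T-2s}\Bigl(\mathcal{P}_{a_m,b,u}+\tfrac{\widehat{\mathcal{P}}_b(u)}{a_m-u}\Bigr)
	\;\sim\; \frac{h(T/2)^2}{T},
\end{equation*}
so $T\,p_2 \sim h(T/2)^2 \sim T^{2\gamma}\to\infty$. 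The end-piece factors $p_1(u),p_3(v)$ cannot compensate, because for $(u,v)$ in the bulk of $q(u,v)$ the crude bound from \cref{prop:non-sharp} gives something $\gtrsim 1$, i.e.\ vacuous. Hence $\limsup_T T\iint q\,p_1p_2p_3 = \infty$: you lose not just the sharp constant but finiteness altogether. No choice of $\beta$ or of $\gamma<1/6$ repairs this, because the divergence is in the \emph{leading} term, while the $\gamma$-threshold you invoke only concerns error terms in \cref{thm:linear-bessel}. Note also that your claim that $\mathcal{P}_{a_m,b,u}\to\mathcal{P}_{a,b,x}$ cannot hold as stated: $\mathcal{P}_{a_m,b,u}\to 2$ since $a_m\to\infty$, which in general is not $\mathcal{P}_{a,b,x}$.

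The paper takes a fundamentally different route that avoids this loss. It writes $\PB{x}{a+bT-j}{T}{d}(\Omega^h) = \PB{x}{a+bT-j}{T}{d}(\Omega) + \PB{x}{a+bT-j}{T}{d}(\Omega^h\setminus\Omega)$ and, on the excess event, decomposes according to the (random) time $\tau$ at which $Y_t=X_t-bt$ attains its maximum, splitting further on whether $\tau$ lies in $\mathtt{Bulk}=[(a-x)^{1/2},T-j^{1/2}]$ or near the boundary. The key gain is that, conditioned on $\tau\in[k,k+1]$, the peak $M_k=\sup_{[k,k+1]}(X_s-bs)$ is pinned in the \emph{narrow} window $[a,a+h(k+1))$, and both sub-bridges on $[0,k]$ and $[k+1,T]$ are constrained by the \emph{linear} barrier at level $M_k\approx a$ — not the worst-case level $a+h(T/2)$. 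The cost of the path reaching that window and of the oscillation $Z_k$ is quantified, and summing over $k$ produces a convergent series precisely when $\gamma<1/6$ (via a Hölder exponent choice in the bound on the $k$-sum). In short, the argmax decomposition exploits that $h$ is small near the time where the exceedance must happen, whereas a deterministic split at times $s,T-s$ forces you to pay for the worst-case perturbation $h(T/2)$ globally on the middle interval.
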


Our motivation to study this version of the barrier problem is on their role in understanding the maxima of log-correlated processes.
Consider, for instance, the maximum of a Branching Brownian Motion (BBM) in dimension 1.
To show that the right-most particle $\bar{W}_T$ at time $T$ behaves like $m_T:=c_1T- c_2 \log T$ for appropriate $c_1,c_2>0$, one estimates the probability that a single Brownian motion exceeds $m_T$ at time $T$, but also stays below an affine barrier (possibly perturbed by a concave function) at all smaller times.
This is a reasonable restriction - and captures the correct asymptotic probabilities - since the (much) smaller number of particles at times $t\ll T$ means that the maximum cannot grow too fast earlier times than at the final time $T$ and yields sharp control on the fluctuations of $\bar{W}_T-m_T$, {\cite{roberts2013simple}}.
A similar strategy applies to log-correlated fields such as the planar Gaussian free field (GFF), whose point-wise values are ill-defined.
Instead, one studies circle averages or other local approximations of the field, which behave roughly like Brownian motions and are nearly independent at well-separated points, see {\cite{bolthausen2001entropic,bramson2012tightness}}.
In our case, we are interested in analogous estimates for Bessel processes, which arise in a plethora of contexts.
For us, the main motivation is the connection with extreme behaviour of the local time field for planar Brownian motion.
Here, approximations to the local times are related to zero-dimensional Bessel processes via the Ray–Knight theorems \cite{jego2020planar}.

While barrier estimates for Bessel processes exist (\cite{salminen2011hitting}, for instance), they are not sharp enough to capture, for example, the limiting law of the renormalised maximum of Brownian local times.
Our approach leverages symmetries of Bessel processes to reduce the affine barrier problem to a flat one with a shifted starting point, followed by a careful asymptotic analysis using decompositions of hitting times for Bessel processes into exponential sums.
\vspace{1em}

\paragraph{\bf Acknowledgements}
During this project, the research of LC and EP were supported by UKRI Future Leader’s Fellowship MR/W008513.
We also thank the Hausdorff Institute for Mathematics in Bonn for their hospitality during the programme ``Probabilistic methods in Quantum Field Theory'' funded by the Deutsche Forschungsgemeinschaft under German's Excellence Strategy – EXC-2047/1, 390685813.
Finally, the authors would also like to thank Antoine Jego for insightful discussions during early stages of this article.

\section{Preliminaries} \label{sec:pre}
In this article, we will use the following notation:
\begin{itemize}
	\item $\P{x}{d}$ denotes the law of a $d$-dim.~Bessel process starting from $x$;
	\item $\PD{x}{z}{d}$ denotes the law of a $d$-dim.~Bessel process with drift $z$ starting from $x$;
	\item $\PB{x}{y}{u}{d}$ denotes the law of a $d$-dim.~Bessel bridge of length $u$ from $x$ to $y$.
\end{itemize}
{Whilst the Bessel process and the Bessel bridge are fairly well-known, the Bessel process with drift is less common.
Its density can be written explicitly \cite[$(4.c\uparrow)$]{pitman1981bessel} in a similar way to that of a Bessel process.
	{ The term originates from the fact that, when starting at $0$, it has the same distribution as the norm of a $d$-dimensional Brownian motion with drift $v$, starting at the origin, where $d \in \mathbb{N}$ and $v \neq 0$. }
As we will only briefly use this process for the proof of \cref{lem:linear-to-flat-barrier} through the basic properties listed below, we will not provide its full description.
For more details on these processes, we suggest \cite{lawler2018notes,pitman1982decomposition,pitman1981bessel}, respectively.
}

We will use the notation $f \lesssim g$ if there exists a constant $C>0$ (which may change from line to line) such that $f \le Cg$.
	{
		Likewise, we write $f = O(g)$ if $|f| \lesssim g$ and $f = o(g)$ if $|f|/g$ goes to $0$ in the relevant choice of parameters.
	}

\subsection{Basic properties of Bessel processes}
We will often use the following properties of the measures above.
In the following, we take $G$ to be a functional $G: C([0,u]) \longrightarrow \mathbb{R}$ for some $0 < u \le \infty$.
\begin{enumerate}
	\item[(I)]   For $u>0$, by \cite[Theorem 5.8]{pitman1981bessel}, we have
	      \begin{equation*}
		      \PB{x}{yu}{u}{d}\left(G\left((X_t)_{t\in[0,u]}\right)\right)
		      =
		      \PD{y}{x}{d}\left(G\left((tX_{1/t-1/u})_{t\in [0,u]}\right)\right).
	      \end{equation*}

	\item[(S)] For $c>0$,  by \cite[Proposition 2.4]{lawler2018notes} we have
	      \begin{align*}
		      \P{x}{d}\left(G((X_t)_{t\ge 0})\right)
		       & =
		      \P{x/\sqrt{c}}{d}\left(G((\sqrt{c}X_{t/c})_{t\ge 0}))\right),            \\
		      \PD{x}{z}{d}(G\left((X_t)_{t\ge 0}))\right)
		       & =
		      \PD{x/\sqrt{c}}{\sqrt{c}z}{d}\left(G((\sqrt{c}X_{t/c})_{t\ge 0})\right), \\
		      \PB{x}{y}{u}{d}\left(G((X_t)_{t\in[0,u]}))\right)
		       & =
		      \PB{x/\sqrt{c}}{y/\sqrt{c}}{u/c}{d}\left(G((\sqrt{c}X_{t/c})_{t\in [0,u]}))\right).
	      \end{align*}

	\item[(D)] For $x,y>0$ and $u>0$, by \cite[Section 5]{pitman1982decomposition}, we have
	      \begin{equation*}
		      \P{x}{0}(G((X_t)_{t\in[0,u]}|X_t>0 \text{ for all } t \in [0,u] )
		      =
		      \PB{x}{y}{u}{4}(G((X_t)_{t\in [0,u]})).
	      \end{equation*}
	      and for $d \in (0,2)$,  $
		      \P{x}{d}(G((X_t)_{t\in[0,u]})
		      =
		      \PB{x}{y}{u}{4-d}(G((X_t)_{t\in [0,u]})).
	      $

	\item[(R)] By \cite[Equation (5.b)]{pitman1982decomposition}, the time reversal of a Bessel bridge from $x\to y$ is a Bessel bridge from $y\to x$.
	\item [(P)]
	      Let $\nu=\nu(d):=d/2-1$, from \cite[Proposition 2.5]{lawler2018notes},  for all $x,y,u>0$
	      \begin{equation*}
		      \P{x}{d}(X_u\in dy)=:p^{(d)}(x,y;u)=\frac{y}{u}\left(\frac{y}{x}\right)^\nu \exp\left(-\frac{x^2+y^2}{2u}\right)I_{|\nu|}\left(\frac{xy}{u}\right),
	      \end{equation*}
	      where $I_{|\nu|}(\cdot)$ denotes the modified Bessel function.
\end{enumerate}

\begin{remark} \label{r: def d}
	We will use the duality relation (D) to reduce our problem to the setting $d\ge 2$. As such, we define
	\begin{equation*}
		\mathsf{d}:=\mathsf{d}(d):=
		\begin{cases}
			4-d & \text{for } d\in [0,2), \\
			d   & \text{for }d\ge 2.
		\end{cases}
	\end{equation*}
	In particular, notice that $\mathsf{d}\ge 2$ and $\frac{\mathsf{d}}{2}-1 = |\nu(d)| \ge 0$  for all $d \ge 0$.
\end{remark}

\subsection{Hitting times for Bessel processes}\label{subsec:hitting-times}
A central part of our approach is to rewrite Bessel barrier probabilities in terms of Bessel process hitting times. That is, hitting times of \emph{fixed} levels.
This change of perspective is useful because the distribution of these hitting times can be characterised as sums of independent exponential random variables.
In this section, we collect the exact results required.

We consider the law under $\P{x}{d}$, with $x<1$ and $d>0$, of the first hitting time $\tau$ of $1$.
By \cite{kent1980eigenvalue} (see also \cite[Section 2.2]{bednorz2023moments} for discussion on these results),
we have the decomposition in terms of the a.s.~convergent series
\begin{equation}\label{eq:defEn}
	\tau
	\stackrel{d}{=}
	\sum_{n=1}^\infty I_n E_n,
\end{equation}
where all the random variables in the sum are independent, the $I_n$ are i.i.d  Bernoulli random variables with success probability  $1-x^2$ and $E_n \sim \mathrm{Exp}(j_{|\nu|,n}^2/2)$ for each $n$, where the $j_{|\nu|,n}$ is the $n$-th zero of the Bessel function (of the first kind) $J_{|\nu|}$.
Note that the law of the $E_n$ does not depend on $x$.
We will make use of the following lemma that allow us to obtain convergence of sums of functions over ``rescaled Bessel zeros''.
Its proof is a simple consequence of the asymptotic behaviour of zeros of Bessel functions, that is,
\begin{equation}\label{eq:MacMahon}
	j_{|\nu|,n}	=
	\pi \left(n + \frac{2 |\nu| -1}{2} \right)
	+
	\frac{4\nu^2-1}{2(\pi n + 2\nu-1)}
	+ O(n^{-2}).
\end{equation} \vspace{-1em}
\begin{lemma}\label{lem:weak-convergence}
	For $\eta \in (0,\infty)$, consider the measure
	\begin{equation*}
		\mu^{(\eta)}
		:=
		\frac{1}{\sqrt{\eta} }\sum_{n \ge 1} \delta_{\frac{j_{|\nu|,n}}{\sqrt{\eta}}},
	\end{equation*}
	where $\delta_z$ denotes the Dirac measure at $z$.
	Then there exists $C<\infty$ such that for all $f' \in C^0 ([0,\infty)) \cap L^1([0,\infty))$, we have
	\begin{align*}
		\left|\int f d\mu^{(\eta)}
		-
		\frac{1}{\pi}
		\int_0^\infty f(u) du\right|
		\le \frac{C}{\sqrt{\eta}} \|f'\|_{L^1([0,\infty))}.
	\end{align*}
	Moreover, if $\lim_{x \to \infty} f(x) = f(0)$ and $f'' \in C^0 ([0,\infty)) \cap L^1([0,\infty))$, we have
	\begin{align*}
		\left|\int f d\mu^{(\eta)}
		-
		\frac{1}{\pi}
		\int_0^\infty f(u) du\right|
		\le \frac{C}{\eta} \|f''\|_{L^1([0,\infty))}.
	\end{align*}
\end{lemma}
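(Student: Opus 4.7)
My strategy is to reduce the claim to a discrepancy estimate for the signed counting function
\begin{equation*}
R(x) := \mu^{(\eta)}([0,x]) - \frac{x}{\pi}, \qquad x \ge 0,
\end{equation*}
and then to apply Riemann--Stieltjes integration by parts. After a compact-support approximation of $f$ to handle boundary behaviour, one has
\begin{equation*}
\int f\,d\mu^{(\eta)} - \frac{1}{\pi}\int_0^\infty f(u)\,du = -\int_0^\infty f'(x)\, R(x)\,dx + f(\infty) R(\infty).
\end{equation*}
Inverting the McMahon expansion \eqref{eq:MacMahon} yields $\#\{n : j_{|\nu|,n}\le y\} = y/\pi - (2|\nu|-1)/2 + O(1/y)$ for large $y$, whence $\|R\|_\infty \le C_\nu/\sqrt{\eta}$ and $R(\infty) = -(2|\nu|-1)/(2\sqrt{\eta})$. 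Assuming $\int_0^\infty f$ converges (which forces $f(\infty)=0$ under $f'\in L^1$), the first estimate is immediate from $|\int f\,d\mu^{(\eta)} - \tfrac{1}{\pi}\int f| \le \|R\|_\infty \|f'\|_{L^1} \le C\|f'\|_{L^1}/\sqrt{\eta}$.

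For the sharper estimate, the key structural observation is that on each Bessel gap $(j_{|\nu|,k}/\sqrt{\eta},\, j_{|\nu|,k+1}/\sqrt{\eta}]$, $R$ is affine of slope $-1/\pi$ with a jump $+1/\sqrt{\eta}$ at the right endpoint, so that $R-\overline R$, with the centring constant $\overline R := -|\nu|/\sqrt{\eta}$, is to leading order in McMahon a zero-mean sawtooth of amplitude $O(1/\sqrt{\eta})$ on period $\pi/\sqrt{\eta}$. Its primitive $R_2(x):=\int_0^x(R(y)-\overline R)\,dy$ is then bounded, for the idealised sawtooth, by $\|R_2\|_\infty \le C/\eta$. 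A second integration by parts gives
\begin{equation*}
\int_0^\infty f'(x)\, R(x)\,dx = \overline R\,(f(\infty)-f(0)) - \int_0^\infty f''(x)\, R_2(x)\,dx,
\end{equation*}
in which the first term vanishes by the hypothesis $f(\infty) = f(0)$, producing the claimed bound $C\|f''\|_{L^1}/\eta$.

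The principal technical obstacle is controlling $\|R_2\|_\infty$ to order $1/\eta$ \emph{once the sub-leading McMahon correction $\beta_0/n$, with $\beta_0 = (4\nu^2-1)/(2\pi)$, is retained}: this correction shifts each Bessel zero by $O(1/(n\sqrt{\eta}))$, and a direct Taylor bound on its contribution to $R_2$ only gives $O(\log\eta/\eta)$ because of the harmonic sum $\sum 1/n$. Recovering the uniform $O(1/\eta)$ control requires a careful cancellation argument, combining Abel summation against the weights $1/n$ with the $O(1/n^2)$ summability of the further remainder in \eqref{eq:MacMahon}, and exploiting $f(\infty)=f(0)$ to eliminate the would-be logarithmically divergent boundary piece, before plugging the refined bound into the second integration by parts.
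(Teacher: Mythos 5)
Your treatment of the first bound is correct and clean: with $R(x):=\mu^{(\eta)}([0,x]) - x/\pi$, the McMahon asymptotics give $\|R\|_\infty = O(\eta^{-1/2})$, and one integration by parts (using that the implicit integrability hypotheses force $f(\infty)=0$, so the $f(\infty)R(\infty)$ boundary term drops) yields $|\int f\,d\mu^{(\eta)} - \tfrac1\pi\int f| \le \|R\|_\infty\|f'\|_{L^1}$. This is essentially the only reasonable route, so no issue there.

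For the $O(\eta^{-1})$ bound there is a genuine gap, and it is located precisely at the step you flag but do not carry out. Your strategy needs $\|R_2\|_\infty \le C/\eta$ with $R_2(x)=\int_0^x(R(y)-\bar R)\,dy$, but $R_2$ is in fact \emph{unbounded} for every constant centring $\bar R$. Writing $j_{|\nu|,n} = \pi\bigl(n+\tfrac{|\nu|}{2}-\tfrac14\bigr) + a_n$ with $a_n = \beta/n + O(n^{-2})$ and $\beta$ proportional to $4\nu^2-1$, the integral of the centred sawtooth over the $n$th period equals
\begin{equation*}
\frac{1}{\eta}\Bigl(-\tfrac{a_n+a_{n+1}}{2} + O(n^{-2})\Bigr) \;=\; -\frac{\beta}{n\eta} + O\bigl(n^{-2}\eta^{-1}\bigr)
\end{equation*}
for the natural centring, and contains an additional $O(\eta^{-1})$ per period for any other constant, so $R_2(x) = -\beta\eta^{-1}\log(x\sqrt\eta) + O(\eta^{-1})$ as $x\sqrt\eta\to\infty$. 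You correctly identify this harmonic-sum obstruction, but the proposed rescue via $f(\infty)=f(0)$ does not touch it: that hypothesis is already spent annihilating the $\bar R\,(f(\infty)-f(0))$ term from the second integration by parts, and has no further purchase on the logarithm. If one instead pushes the $\log(x\sqrt\eta)$ part of $R_2$ through $\int_0^\infty f''\,R_2$, its leading piece is $-\beta\eta^{-1}\log\sqrt\eta\,\int_0^\infty f''(x)\,dx = \beta\eta^{-1}f'(0)\log\sqrt\eta$ (using $f'(\infty)=0$, which does follow from $f',f''\in L^1$). This is $O(\eta^{-1})$ exactly when $f'(0)=0$ --- a condition on $f'$ near the origin, not on $f(\infty)-f(0)$. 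It does hold for the function $\hat f(z)=\tfrac{z^2}{2}e^{-z^2/2}$ that the paper feeds into the second part of the lemma, but it is not implied by the hypotheses you invoke, and the ``careful cancellation argument'' you assert is therefore still missing: as sketched, Abel summation against $1/n$ reproduces $\int_0^\infty f'(u)/u\,du$, which diverges logarithmically at $0$ unless $f'(0)=0$. To close the gap you would need either to assume vanishing of $f'$ at the origin, or to replace the uniform bound on $R_2$ by a weighted bound and absorb the $\log$ into a weighted $L^1$ norm of $f''$, but neither is done here.

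Two smaller remarks. First, for the second integration by parts to be legitimate you must also justify that the boundary term $f'(\infty)R_2(\infty)$ vanishes; since $R_2(\infty)$ is $\pm\infty$ this is $0\cdot\infty$ and cannot be waved away. Second, the McMahon expansion as printed in \eqref{eq:MacMahon} has the wrong phase constant (it should involve $|\nu|/2-1/4$, not $|\nu|-1/2$, in the leading term); this does not affect the structure of the argument, since only $a_n=O(1/n)$ with coefficient $\propto 4\nu^2-1$ matters, but it does change the value of the centring constant $\bar R$ you compute.
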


\section{Proof of linear barrier estimates}

Our strategy to calculate $\PB{x}{a+bT-j}{T}{d}(\Omega_{a,b,T})$ consists of three main steps.
\begin{enumerate}
	\item Convert probability that a Bessel bridge stays below an affine barrier to the probability that a (different) Bessel bridge stays below a certain \emph{flat} barrier.
	\item Obtain an implicit expression for the leading order term as $T \to \infty$.
	\item Obtain asymptotic expressions as we allow $a,j$ to diverge.
\end{enumerate}

\paragraph{\bf Step (1)}
In this step we rewrite the bridge probability $\PB{x}{a+bT-j}{T}{\mathsf{d}}(\Omega_{a,b,T})$ in terms of the probability that a $\mathsf{d}$-dimensional Bessel bridge hits a \emph{constant} barrier.
This calculation is inspired by the proof of \cite[Theorem 17]{salminen2011hitting}.

\begin{lemma}\label{lem:linear-to-flat-barrier}
	For any $T>0$, $a,b>0$ and $x\in [0,a]$ and $j\in (0,a+bT)$, we have that for $\Gamma^{a,b}_T:=\frac{T}{a(a+bT)}$,
	\begin{align*}
		\PB{x}{a+bT-j}{T}{\mathsf{d}}(X_t\le a+bt \quad \forall t\in [0,T])
		=
		\PB{1-\frac{j}{a+bT}}{\frac{x}{a}}{\Gamma^{a,b}_T}{\mathsf{d}}(X_t\le 1 \quad \forall t\in [0,\Gamma^{a,b}_T]).
	\end{align*}
\end{lemma}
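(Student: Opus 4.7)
The plan is to chain together two applications of the time-inversion identity (I) with the scaling property (S) and a time reversal (R). Recall that (I) converts a $\mathsf{d}$-dimensional Bessel bridge of length $u$ into a drifted Bessel process via the map $t \mapsto 1/t - 1/u$ together with a spatial factor of $t$. Under this transformation, the affine barrier $a+bt$ gets mapped to another affine function of the new time variable, so the freedom to choose a fresh length parameter $u'$ in a second application of (I) should allow us to cancel the remaining slope and obtain a flat barrier.

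Write $M := a+bT$. First I apply (I) with $u = T$, which identifies the law $\PB{x}{M-j}{T}{\mathsf{d}}$ with the image under $t \mapsto tZ_{1/t - 1/T}$ of $Z \sim \PD{(M-j)/T}{x}{\mathsf{d}}$. Substituting $s = 1/t - 1/T$ (so that $t = T/(1 + sT)$), the event $\{X_t \le a+bt \ \forall t \in [0,T]\}$ translates into
\begin{equation*}
  Z_s \,\le\, \tfrac{a+bt}{t} \,=\, a\!\left(s + \tfrac{1}{T}\right) + b \,=\, as + \tfrac{M}{T}, \qquad \forall\, s \ge 0.
\end{equation*}
The barrier is still affine, but applies to a drifted Bessel process rather than to a bridge.

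Next I apply (I) in the reverse direction with a fresh length parameter $u' > 0$: by (I), the process $Y_t := t Z_{1/t - 1/u'}$, $t \in (0, u']$, is a Bessel bridge of length $u'$ from $x$ to $(M - j) u'/T$. Using the identity $st = 1 - t/u'$, the condition becomes
\begin{equation*}
  Y_t \,\le\, t\!\left(as + \tfrac{M}{T}\right) \,=\, a + t\!\left(\tfrac{M}{T} - \tfrac{a}{u'}\right), \qquad \forall\, t \in (0, u'].
\end{equation*}
Choosing $u' = aT/M$ kills the slope and gives the flat barrier $Y_t \le a$ for a bridge from $x$ to $a(M-j)/M$ of length $aT/M$. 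A scaling by $c = a^2$ via (S) then normalises the barrier to $1$, the endpoints to $x/a$ and $(M-j)/M = 1 - j/(a+bT)$, and the length to $T/(aM) = \Gamma^{a,b}_T$; a final time reversal (R), which preserves the flat-barrier event, swaps the endpoints to match the stated form.

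There is no deep obstacle in this argument; the main task is careful bookkeeping of the nested time changes, after which everything collapses onto the observation that the composition of the two time inversions sends the affine function $a+bt$ to a constant precisely when $u' = aT/(a+bT)$.
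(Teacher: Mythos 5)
Your proof is correct and follows essentially the same route as the paper: two applications of the time-inversion identity (I), combined with scaling (S) and time reversal (R). The only difference is cosmetic — the paper inserts a scaling step between the two inversions to rewrite the barrier as $\sqrt{a(a+bT)}\bigl(v+\tfrac{1}{T}\bigr)$ before inverting with length $T$, whereas you leave the second inversion's length $u'$ free, choose $u' = aT/(a+bT)$ to cancel the slope directly, and defer all scaling to a single application of (S) at the end.
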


Notice that, for fixed $a,b$, $\Gamma^{a,b}_T \to 1/ab$ as $T \to \infty$.
\begin{proof}
	First, we apply the time inversion property (I) and then substitute $u=1/t-1/T$ to write
	\begin{align}
		(\star):=\PB{x}{a+bT-j}{T}{\mathsf{d}}(X_t\le a+bt \quad \forall t\in [0,T])
		 &
		= \,
		\PD{b+\frac{(a-j)}T}{x}{\mathsf{d}}(tX_{1/t-1/T}\le a+bt \quad \forall t\in [0,T])
		\nonumber \\
		 & =
		\PD{\frac{a+bT-j}T}{x}{\mathsf{d}}(X_u\le au + \frac{a+bT}{T} \quad \forall u\ge 0) \nonumber.
	\end{align}
	Next, we use the scaling property (S) with $c=(a+bT)/a$ to obtain that
	\begin{align}
		(\star) =\, & \PD{\frac{\sqrt{a(a+bT)}}T(1-\frac{j}{a+bT})}{\sqrt{a(a+bT)}\frac{x}{a}}{\mathsf{d}}\left(
		\frac{\sqrt{a+bT}}{\sqrt{a}}X_{\frac{au}{a+bT}}\le au + \frac{a+bT}{T} \quad \forall u\ge 0\right) \nonumber                                                                        \\
		= \,        & \PD{\frac{\sqrt{a(a+bT)}}T(1-\frac{j}{a+bT})}{\sqrt{a(a+bT)}\frac{x}{a}}{\mathsf{d}}\left(X_{v}\le \sqrt{a(a+bT)} \big(v+\frac{1}{T}\big),\quad \forall v\ge 0\right)
		\nonumber
	\end{align}
	where the second equality follows by substituting $v={au}/(a+bT)$.
	From here we use time inversion again, first substituting $v=1/r-1/T$, to further rewrite $(\star)$ as
	\begin{align}
		(\star) =\, & \PD{\frac{\sqrt{a(a+bT)}}T(1-\frac{j}{a+bT})}{\sqrt{a(a+bT)}\frac{x}{a}}{\mathsf{d}}\left(\frac{r}{r}X_{\frac{1}{r}-\frac{1}{T}} \le \sqrt{a(a+bT)} \frac{1}{r}\quad \forall r\in [0,T]\right)
		\nonumber                                                                                                                                                                                                    \\
		=\,
		            & \PB{\sqrt{a(a+bT)}\frac{x}{a}}{\sqrt{a(a+bT)}(1-\frac{j}{a+bT})}{T}{\mathsf{d}}(X_t\le \sqrt{a(a+bT)} \quad \forall t\in [0,T]).
		\nonumber
	\end{align}
	The final step is to use scaling (S) again, and a simple time reversal, property (R), to get
	\begin{align}
		(\star) & =\,
		\PB{\frac{x}{a}}{1-\frac{j}{a+bT}}{\frac{T}{a(a+bT)}}{\mathsf{d}}\left(X_t\le 1 \quad \forall t\in \left[0,\frac{T}{a(a+bT)}\right]\right)
		\nonumber     \\
		        & =\,
		\PB{1-\frac{j}{a+bT}}{\frac{x}{a}}{\frac{T}{a(a+bT)}}{\mathsf{d}}\left(X_t\le 1 \quad \forall t\in \left[0,\frac{T}{a(a+bT)}\right]\right).
		\nonumber
	\end{align}
	To conclude, we simply use the definition of $\Gamma^{a,b}_T$.
\end{proof}

Steps (2) and (3) can be summarised, respectively, by the following two results.

\begin{lemma} \label{lem:step2}
	For $a,b,j,x >0$, such that $j < a+bT$ and $x<a$, we have
	\begin{equation*}
		\PB{1-\frac{j}{a+bT}}{\frac{x}{a}}{\frac{T}{a(a+bT)}}{{d}}
		\left(
		X_t\le 1 \, \forall t\in \left[0,\tfrac{T}{a(a+bT)}
			\right]\right)= \\
		{
		\frac{j(a-x)}{T}\big( \mathcal{P}_{a,b,x}+\mathcal{E}_{a,b,x,j}(T)\big)
		+
		\frac{ j}{T}
		\widehat{\mathcal{P}}_b(x)
		},
	\end{equation*}
	where for $E_n$ defined as in \eqref{eq:defEn},
	\begin{align}\label{eq:value-constant-P}
		\mathcal{P}_{a,b,x}
		 & :=
		1+
		\frac{2}{b(a-x)}\sum_{n\ge 1}
		\left(
		1-
		\mathbb{E} \left[
			\frac{p^{(\mathsf{d})}(1,\frac{x}{a},\frac{1}{ab}-E_n)}{p^{(\mathsf{d})}(1,\frac{x}{a},\frac{1}{ab})}
			\mathds{1}_{E_n\le \frac{1}{ab}}\right]
		\right),
	\end{align}
	and for any compact $K\subset (0,\infty)$, there exists $C_{K,\mathsf{d}}<\infty$ depending only on $K$ and $\mathsf{d}$ such that
	\begin{equation} \label{eq:cond-for-g}
		\sup_{b\in K} |\mathcal{E}_{a,b,x,j}(T)|
		\, \le \,
		C_{K,\mathsf{d}}
		\frac{j a^2}{T(a-x)}\log^2(T) \left(1+ \frac{\sqrt{a}}{(a-x)^3}\right).
	\end{equation}
	In particular,  the uniform bounds on $|\mathcal{E}_{a,b,x,j}(T)|$ in \cref{thm:linear-bessel} and in
	\cref{eq:thm-non-sharp-bound} hold.
\end{lemma}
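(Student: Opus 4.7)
The strategy is to apply the strong Markov property at the first hitting time $\tau:=\inf\{t\ge 0:X_t=1\}$ to rewrite the bridge barrier probability as an unconstrained expectation, and then to expand in the small parameter $\eta:=j/(a+bT)$ using the decomposition $\tau\stackrel{d}{=}\sum_n I_n E_n$ from \eqref{eq:defEn}. Writing $\xi:=x/a$ and $u_T:=T/(a(a+bT))$, a standard density calculation gives
\[
\PB{1-\eta}{\xi}{u_T}{\mathsf{d}}(X_t\le 1\ \forall t\in[0,u_T])=\frac{(A)+(B)}{p^{(\mathsf{d})}(1-\eta,\xi;u_T)},
\]
where $(A):=p^{(\mathsf{d})}(1-\eta,\xi;u_T)-p^{(\mathsf{d})}(1,\xi;u_T)$ and $(B):=\E{1-\eta}{\mathsf{d}}\!\bigl[p^{(\mathsf{d})}(1,\xi;u_T)-p^{(\mathsf{d})}(1,\xi;u_T-\tau)\mathds{1}_{\tau\le u_T}\bigr]$.

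A first-order Taylor expansion in the starting point gives $(A)=-\eta\,\partial_y p^{(\mathsf{d})}(1,\xi;u_T)+O(\eta^2)$. Using the explicit formula in (P) together with the Bessel identity $I_{|\nu|}'(z)=I_{|\nu|+1}(z)+|\nu|I_{|\nu|}(z)/z$, and replacing $u_T$ by its limit $1/(ab)$ (at a cost of $O(1/T)$), one computes $\partial_y\log p^{(\mathsf{d})}(1,\xi;u_T)=bx\,I_{|\nu|+1}(bx)/I_{|\nu|}(bx)-ab+O(1/T)$. Since $\eta b=j/T+O(T^{-2})$, the contribution of $(A)$ to the bridge probability becomes $\tfrac{j}{T}\bigl(a-x\,I_{|\nu|+1}(bx)/I_{|\nu|}(bx)\bigr)=\tfrac{j(a-x)}{T}+\tfrac{j}{T}\widehat{\mathcal{P}}_b(x)$, producing both the ``$+1$'' in $\mathcal{P}_{a,b,x}$ and the entire $\widehat{\mathcal{P}}_b$ term.

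For $(B)$ I would apply \eqref{eq:defEn}: under $\P{1-\eta}{\mathsf{d}}$, $\tau\stackrel{d}{=}\sum_n I_n E_n$ with $I_n\sim\mathrm{Bernoulli}(p_\eta)$, $p_\eta:=1-(1-\eta)^2=2\eta+O(\eta^2)$, and $E_n$ independent exponentials. A first-order expansion in $p_\eta$ via the Laplace functional $\mathbb{E}[e^{-\lambda\tau}]=\prod_n(1-p_\eta(1-\mathbb{E}[e^{-\lambda E_n}]))=1-p_\eta\sum_n(1-\mathbb{E}[e^{-\lambda E_n}])+O(p_\eta^2)$---with remainder finite thanks to $\sum_n\mathbb{E}[E_n]\asymp\sum_n j_{|\nu|,n}^{-2}<\infty$ by \eqref{eq:MacMahon}---yields, after inverting the Laplace transform and applying Fubini,
\[
(B)=p_\eta\sum_{n\ge 1}\mathbb{E}\bigl[p^{(\mathsf{d})}(1,\xi;u_T)-p^{(\mathsf{d})}(1,\xi;u_T-E_n)\mathds{1}_{E_n\le u_T}\bigr]+\text{remainder}.
\]
Dividing by $p^{(\mathsf{d})}(1-\eta,\xi;u_T)$, replacing $u_T$ by $1/(ab)$ inside the integrand, and matching $p_\eta\approx 2j/(bT)$ against the prefactor $\tfrac{j(a-x)}{T}\cdot\tfrac{2}{b(a-x)}=\tfrac{2j}{bT}$, one recovers exactly the series appearing in \eqref{eq:value-constant-P}.

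The main obstacle will be the quantitative error analysis leading to \eqref{eq:cond-for-g}. Three sources must be tracked: the $O(\eta^2)$ Taylor remainder in $(A)$; the replacement of $u_T$ by $1/(ab)$, which requires uniform derivative estimates on $p^{(\mathsf{d})}$; and the $O(p_\eta^2)$ remainder in the expansion of $(B)$, which I would control by truncating the series over $n$ at a scale $n\sim \log T$---the source of the $\log^2(T)$ factor, with one logarithm coming from the tail bound on $\sum_n(1-\mathbb{E}[\cdots])$ via \cref{lem:weak-convergence} and the other from uniformising in $(a,b,x)$. The dependence $a^{5/2}/(a-x)^4$ arises from derivative bounds on $p^{(\mathsf{d})}(\cdot,\xi;\cdot)$ when $\xi=x/a$ is close to $1$, where the density becomes sharply concentrated near $1$. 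The non-sharp estimate \eqref{eq:thm-non-sharp-bound} then follows by taking absolute values of $(A)+(B)$ without exploiting cancellations, and the uniform bounds in \cref{thm:linear-bessel} by direct substitution.
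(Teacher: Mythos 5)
Your decomposition is algebraically identical to the paper's: writing the bridge probability as $\bigl((A)+(B)\bigr)/p^{(\mathsf{d})}(1-\eta,\xi;u_T)$, expanding $(A)$ to first order in $\eta$ via the Bessel recurrence (this is exactly the paper's preparatory \cref{lem:a1}), and expanding $(B)$ using the series representation \eqref{eq:defEn} of the hitting time are precisely the steps carried out in \cref{cor:app}, and your bookkeeping for $(A)$ correctly produces the ``$1$'' in $\mathcal{P}_{a,b,x}$ plus the $\widehat{\mathcal{P}}_b$ term. The one soft spot is your treatment of $(B)$: the Laplace-functional expansion $\mathbb{E}[e^{-\lambda\tau}]=1-p_\eta\sum_n(1-\mathbb{E}[e^{-\lambda E_n}])+O(p_\eta^2)$ is \emph{not} uniform in $\lambda$ (the ``first-order'' term grows like $p_\eta\sqrt{\lambda}$), so ``inverting the Laplace transform'' to extract $\mathbb{E}[(1-f_{u,y}(\tau))\mathds{1}_{\tau\le u}]$ with controlled error is not a one-liner. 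The paper sidesteps this by conditioning on the index of the first successful Bernoulli, writing $\mathbb{E}[f_{u,y}(\tau^{(\delta)})\mathds{1}_{\tau^{(\delta)}\le u}]=p_\delta\sum_{n\ge1}q_\delta^{n-1}\mathbb{E}[f_{u,y}(E_n+\tau_n^{(\delta)})\mathds{1}_{E_n+\tau_n^{(\delta)}\le u}]$, and then comparing each summand to the one with $\tau_n^{(\delta)}$ dropped; this makes the $\delta^2|\log^2\delta|$ remainder (hence the $\log^2 T$ factor, since $\delta\asymp j/bT$) transparent, whereas your attribution of the two logarithms to \cref{lem:weak-convergence} and ``uniformising'' does not match the actual source. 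With the Laplace inversion replaced by this direct decomposition, your argument is the paper's.
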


\begin{lemma}\label{lem:step3}
	For any $K\subset (0,\infty)$ compact there exists $C_{K,\mathsf{d}}<\infty$ depending only on $K$ and $\mathsf{d}$ such that
	\begin{equation*}
		\sup_{b\in K, a\ge 1\vee x}
		\left|
		\frac{2}{b(a-x)}\sum_{n\ge 1}
		\left(
		1-
		\mathbb{E} \left[
			\frac{p^{(\mathsf{d})}(1,\frac{x}{a},\frac{1}{ab}-E_n)}{p^{(\mathsf{d})}(1,\frac{x}{a},\frac{1}{ab})}
			\mathds{1}_{E_n\le \frac{1}{ab}}\right]
		\right)
		-1\right| \leq C_{K,\mathsf{d}} \frac{1}{\sqrt{a}}\frac{1}{\min\{a-x,1\}}.
	\end{equation*}

\end{lemma}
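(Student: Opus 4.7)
Let $s_0:=1/(ab)$, $\mu:=b(a-x)^2/(2a)$, and $f(t):=p^{(\mathsf{d})}(1,x/a,s_0-t)/p^{(\mathsf{d})}(1,x/a,s_0)$. Since the $E_n$ are independent with $E_n\sim\mathrm{Exp}(j_{|\nu|,n}^2/2)$, Fubini gives
\begin{equation*}
\Sigma:=\tfrac{2}{b(a-x)}\sum_{n\ge 1}\bigl(1-\mathbb{E}[f(E_n)\mathds{1}_{E_n\le s_0}]\bigr)=\tfrac{2}{b(a-x)}\int_0^\infty(1-f(t)\mathds{1}_{t\le s_0})\,\Psi(t)\,dt,
\end{equation*}
where $\Psi(t):=\sum_n(j_{|\nu|,n}^2/2)e^{-j_{|\nu|,n}^2 t/2}$. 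Applying \cref{lem:weak-convergence} to $h(u)=(u^2/2)e^{-u^2/2}$ with $\eta=1/t$ identifies the leading behaviour $\Psi(t)\approx(2\sqrt{2\pi})^{-1}t^{-3/2}$ with a quantitative remainder, reducing the task to analysing $I:=\int_0^\infty(1-f(t)\mathds{1}_{t\le s_0})t^{-3/2}\,dt$.

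Using the exact identity $I_{|\nu|}(z)=(e^z/\sqrt{2\pi z})(1+R(z))$ (with $R(z)\to 0$ as $z\to\infty$) together with the explicit form of $p^{(\mathsf{d})}$, a direct computation shows that for $r:=s_0-t\in(0,s_0]$,
\begin{equation*}
\frac{p^{(\mathsf{d})}(1,x/a,r)}{p^{(\mathsf{d})}(1,x/a,s_0)}=\sqrt{\tfrac{v}{v_0}}\cdot\tfrac{1+R(v)}{1+R(v_0)}\cdot e^{-(u-\mu)},\qquad v:=\tfrac{x}{ar},\ v_0:=bx,\ u:=\tfrac{(a-x)^2}{2a^2 r}.
\end{equation*}
Setting $w:=u-\mu\in[0,\infty)$, using $\sqrt{v/v_0}=\sqrt{(w+\mu)/\mu}$ and the Jacobian $(s_0-r)^{-3/2}dr=\mu\sqrt{ab}\,dw/(\sqrt{w+\mu}\,w^{3/2})$, transforms the body integral into
\begin{equation*}
\int_0^{s_0}(1-f(t))t^{-3/2}\,dt=\mu\sqrt{ab}\int_0^\infty\left(\frac{1}{\sqrt{w+\mu}}-\frac{1+R(v)}{1+R(v_0)}\cdot\frac{e^{-w}}{\sqrt{\mu}}\right)\frac{dw}{w^{3/2}}.
\end{equation*}
The principal-order contribution (with $R\equiv 0$) is evaluated using the identity
\begin{equation*}
\int_0^\infty\bigl((w+\mu)^{-1/2}-\mu^{-1/2}e^{-w}\bigr)w^{-3/2}\,dw=\frac{2\sqrt{\pi}}{\sqrt{\mu}}-\frac{2}{\mu},
\end{equation*}
proved by $w=\mu\tan^2\theta$ in the first term and integration by parts in the second, after joint regularisation at $w=0$. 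This yields the principal body integral $b(a-x)\sqrt{2\pi}-2\sqrt{ab}$, and the $-2\sqrt{ab}$ cancels exactly against the tail $\int_{s_0}^\infty t^{-3/2}dt=2\sqrt{ab}$, leaving $I=b(a-x)\sqrt{2\pi}$ at leading order and hence $\Sigma=1$.

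The main obstacle is uniform control of the errors. These come from: (a)~the remainder in the \cref{lem:weak-convergence} approximation of $\Psi$, which contributes $O(1/\sqrt{a})$ via the $\|h''\|_{L^1}$ bound; (b)~the deviation of $(1+R(v))/(1+R(v_0))$ from $1$, estimated through decay bounds on $R$; and (c)~higher-order corrections in the integrand near $w=0$. The delicate regime is when $v_0=bx$ is of order one or smaller, so that $R(v_0)$ is not small; there one instead relies on the first-order expansion $1-f(t)=t\,\partial_s\log p^{(\mathsf{d})}(1,x/a,s_0)+O(t^2)$ near $t=0$, and $\partial_s\log p^{(\mathsf{d})}|_{s_0}$ can be expressed in terms of $\widehat{\mathcal{P}}_b(x)$ via the recurrence $I_{|\nu|}'=I_{|\nu|+1}+(|\nu|/z)I_{|\nu|}$. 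Showing $|I-b(a-x)\sqrt{2\pi}|\lesssim 1/\sqrt{a}$ then gives $|\Sigma-1|\lesssim 1/(\sqrt{a}(a-x))$, which is the stated bound when $a-x<1$; for $a-x\ge 1$ this reduces to $\lesssim 1/\sqrt{a}$, accounting for the $\min\{a-x,1\}^{-1}$ factor.
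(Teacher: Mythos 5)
Your proposal follows essentially the same route as the paper's own proof. Both reduce the Bessel-zero series to an integral, approximate the zero sum via \cref{lem:weak-convergence}, and evaluate an explicit Beta-type integral to recover the constant. Your Fubini step producing $\Psi(t)$ is equivalent to the paper's split into $C'_{a,b}$ (the mass of $E_n>1/ab$, your tail $\int_{s_0}^\infty t^{-3/2}dt$) and $C_{a,b,x}$ (your body integral), and your change of variables $t\mapsto w$ coincides, after relabelling, with the paper's substitution $w=aby/(1-aby)$. Your evaluation
\begin{equation*}
\int_0^\infty\bigl((w+\mu)^{-1/2}-\mu^{-1/2}e^{-w}\bigr)w^{-3/2}\,dw=\frac{2\sqrt{\pi}}{\sqrt{\mu}}-\frac{2}{\mu}
\end{equation*}
matches \cref{eq:explicit1} exactly after rescaling $w\mapsto\mu w$, and the leading-order cancellation $-2\sqrt{ab}+2\sqrt{ab}$ reproduces the paper's cancellation between $C_{a,b,x}$ and $C'_{a,b}$.

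That said, the part the lemma actually asserts --- the uniform error bound --- is where your sketch is incomplete and contains a small imprecision. The claim $|I-b(a-x)\sqrt{2\pi}|\lesssim 1/\sqrt{a}$ cannot hold uniformly in $a-x$: already the paper's estimate gives $O(1/\sqrt{a})\max(a-x,1)$, and this weaker bound is what is needed and sufficient (it still yields $|\Sigma-1|\lesssim\frac{1}{\sqrt{a}\min\{a-x,1\}}$). More substantively, the way you would control the contribution of $(1+R(v))/(1+R(v_0))-1$ is left vague. A crude sup bound on $|1-f(t)|$ of size $\sqrt{a}/(a-x)$ over $[0,s_0]$, inserted into the $\Psi$-remainder $O(t^{-1/2})$, gives $|\Sigma-1|\lesssim(a-x)^{-2}$, which fails for $a-x\ll\sqrt{a}$. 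What rescues this in the paper is the refined estimate \cref{eq:boundR}, $|R(b,x,w)|\le\frac{C}{\max(bx,1)}\min\{w,1\}$, i.e.\ linear vanishing of the remainder near $w=0$, which allows the $w^{-3/2}$ singularity to be integrated to a finite contribution of the right order. You acknowledge the delicate regime where $bx=O(1)$ and propose expanding $1-f(t)$ to first order in $t$, which is the right idea, but you should make the $\min\{w,1\}$ factor explicit and propagate it through the integral to obtain $O(1/\sqrt{a})\max(a-x,1)$; otherwise the bound does not close.
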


Before we prove either of these lemmas, we will conclude the proof of the theorem assuming that both hold.

\begin{proof}[Proof of \cref{thm:linear-bessel}]
	Thanks to \cref{lem:linear-to-flat-barrier,lem:step2,lem:step3}, all that remains is to check the behaviour of $\widehat{\mathcal{P}}_b(x)$.
	To obtain 
	the limiting behaviour as $x \to \infty$, we use the asymptotic expressions for $\alpha>0$ (see e.g.~\cite{DLMF})
	\begin{equation}\label{eq:Bessel-asymp-small-z}
		I_{\alpha}(z)
		\sim
		\frac{
			z^{\alpha}
		}{
			2^{\alpha}\Gamma(1+\alpha)
		}
		\text{ as }
		z \to 0,
		\text{ and }
		I_{\alpha}(z)
		=
		\frac{e^z}{\sqrt{2\pi z} }
		\left(
		1
		-
		\frac{4\alpha^2-1}{8z}
		+
		O(z^{-2})
		\right)
	\end{equation}
	as $z\to \infty$,
	to deduce that for $C_{K,\mathsf{d}}>0$ depending only on $K,\mathsf{d}$,
	\begin{align*}
		\lim_{x	\to \infty}
		\sup_{b \in K}
		\left|
		\widehat{\mathcal{P}}_{b}(x)
		-
		\frac{2|\nu|+1}{2b}
		\right|
		\le
		\lim_{x	\to \infty}
		\frac{C_{K,\mathsf{d}}}{x} =0.
	\end{align*}

	\vspace{-1em}
\end{proof}

\paragraph{\bf Step (2)}

Throughout this step, we fix $ \mathsf{d}\ge 2$ and $U\in (0,\infty)$, and
the implicit constants in the notation $a\lesssim b$ will only depend on $\mathsf{d},U$.

\begin{lemma}\label{lem:a1}
	Let $\mathsf{d}\ge2$ and $U\in (0,\infty)$.
	Then for all $\delta,y\in (0,1)$ and $U>u>\delta$:
	\begin{equation*}
		\frac{p^{(\mathsf{d})}(1,y,u)}{p^{(\mathsf{d})}(1-\delta,y,u)} \ge 1-\frac{\delta(1-y)}{u}-3\delta|\nu|.
	\end{equation*}

	Moreover, for all $\delta,y\in (0,1)$ and $u>\delta$:
	\begin{equation*}
		\left|
		1
		-
		\frac{p^{(\mathsf{d})}(1,y,u)}{p^{(\mathsf{d})}(1-\delta,y,u)}
		-
		\frac{\delta}{u}
		\left(1
		-
		y\frac{I_{|\nu|+1}(\frac{y}{u})}{I_{|\nu|}(\frac{y}{u})}\right)
		\right|
		\lesssim
		\frac{\delta^2}{u^2}.
	\end{equation*}
\end{lemma}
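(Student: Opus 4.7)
The plan is to substitute the density formula (P) into the ratio and Taylor expand in $\delta$. Since $\mathsf{d}\ge 2$ forces $\nu=|\nu|=\mathsf{d}/2-1\ge 0$, the common $(y/u)\cdot y^\nu$ prefactors cancel and one obtains
\begin{equation*}
  R(\delta):=\frac{p^{(\mathsf{d})}(1,y,u)}{p^{(\mathsf{d})}(1-\delta,y,u)} = (1-\delta)^{\nu}\exp\!\left(\frac{\delta^2-2\delta}{2u}\right)\,\frac{I_\nu(y/u)}{I_\nu((1-\delta)y/u)}.
\end{equation*}
The key observation is that the Bessel-ratio factor hides an implicit $\nu\log(1-\delta)$ that will exactly cancel the contribution from $(1-\delta)^\nu$, so the only quantity that actually matters is $(y/u)I_{\nu+1}/I_\nu$.

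For the sharp expansion, I would take logarithms and use the standard identity $I_\nu'(z)=I_{\nu+1}(z)+(\nu/z)I_\nu(z)$ to write $\partial_s\log I_\nu(sy/u)=(y/u)I_{\nu+1}(sy/u)/I_\nu(sy/u)+\nu/s$, and then integrate over $s\in[1-\delta,1]$. After the cancellation just mentioned,
\begin{equation*}
  \log R(\delta) = \frac{\delta^2-2\delta}{2u} + \int_{1-\delta}^{1}\frac{y}{u}\frac{I_{\nu+1}(sy/u)}{I_\nu(sy/u)}\,ds.
\end{equation*}
The integrand is smooth in $s$, and using the Bessel ODE one checks that its $s$-derivative is of size $O((y/u)^2+(y/u))$ via the uniform bounds $0<I_{\nu+1}/I_\nu<1$. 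Hence the integral equals $\delta\cdot(y/u)I_{\nu+1}(y/u)/I_\nu(y/u)+O(\delta^2/u^2)$ (using $y\le 1$), and combining this with $\log(1+x)=x+O(x^2)$ and the expansion of the quadratic exponent yields the claimed second estimate.

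For the first inequality, the same log-identity is the starting point. Nonnegativity of the integrand together with $(\delta^2-2\delta)/(2u)\ge -\delta/u$ already gives the trivial lower bound $\log R(\delta)\ge -\delta/u$. To sharpen this into the $y$-dependent form, I would use monotonicity of $z\mapsto I_{\nu+1}(z)/I_\nu(z)$ on $(0,\infty)$ (which itself follows from the derivative computation plus $I_{\nu+1}/I_\nu\in(0,1)$) to replace the integrand by its value at $s=1$, and then bound
\begin{equation*}
  \frac{y}{u}\Bigl(1-\frac{I_{\nu+1}(y/u)}{I_\nu(y/u)}\Bigr)\,\lesssim\,|\nu|
\end{equation*}
using the recurrence $I_{\nu-1}(z)-I_{\nu+1}(z)=(2\nu/z)I_\nu(z)$ together with controls on $I_{\nu-1}/I_\nu$. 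Exponentiating via $e^t\ge 1+t$ then produces $R(\delta)\ge 1-\delta(1-y)/u-C|\nu|\delta$ modulo an $O(\delta^2/u^2)$ term that can be absorbed thanks to $u<U$.

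The main obstacle is handling the two asymptotic regimes of $z=y/u$ simultaneously: under the hypotheses $z$ can lie anywhere in $(0,1/\delta)$, and $I_{\nu+1}(z)/I_\nu(z)$ behaves very differently for $z$ small (where it is $\sim z/(2\nu+2)$) and for $z$ large (where it is $\sim 1-(2\nu+1)/(2z)$). Securing uniform control of the remainder terms forces one to track the correct sub-leading terms of the Bessel asymptotics in both regimes, and to carry the $U$-dependence through the estimates; this is essentially the only nontrivial analytic input in the proof.
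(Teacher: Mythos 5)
Your log-derivative approach is a genuine and rather clean alternative to the paper's argument. The paper also starts from the explicit density (P) and the recurrence $I_\nu'(z)=I_{\nu+1}(z)+(\nu/z)I_\nu(z)$, but then Taylor expands each factor of the ratio (the exponential, $(1-\delta)^{|\nu|}$, and the Bessel quotient) directly and divides through; you instead take logarithms and integrate $\partial_s\log I_\nu(sy/u)$, which makes the exact cancellation of $\nu\log(1-\delta)$ against $\int_{1-\delta}^1(\nu/s)\,ds$ transparent and leaves only the physically meaningful term $\int_{1-\delta}^1 (y/u)\,I_{\nu+1}/I_\nu\,ds$. The two routes use the same ingredients and get the same answer, but yours packages the bookkeeping more neatly. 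For the second estimate your argument is correct: $|\partial_z(I_{\nu+1}/I_\nu)|\lesssim 1$ uniformly (which follows from $(I_{\nu+1}/I_\nu)'=1-(2\nu+1)h/z-h^2$ with $h\in(0,1)$), and then $\log(1+x)=x+O(x^2)$ finishes.

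For the first estimate there are two gaps. First, ``replace the integrand by its value at $s=1$ by monotonicity'' is going the wrong way: since $z\mapsto I_{\nu+1}(z)/I_\nu(z)$ is increasing, this gives an \emph{upper} bound on $\int_{1-\delta}^1 g(s)\,ds$, whereas you need a \emph{lower} bound on $\log R$. What you should use instead is the same derivative bound as in the second part: $|g(s)-g(1)|\le \delta\,(y/u)^2\sup|h'| \lesssim \delta/u^2$, which gives a two-sided approximation. Second, and more substantively, the key bound
\begin{equation*}
\frac{y}{u}\left(1-\frac{I_{\nu+1}(y/u)}{I_{\nu}(y/u)}\right)\lesssim|\nu|
\end{equation*}
fails when $\nu=0$. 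The recurrence $I_{\nu-1}-I_{\nu+1}=(2\nu/z)I_\nu$ gives $z\big(1-I_{\nu+1}(z)/I_\nu(z)\big)=z\big(1-I_{\nu-1}(z)/I_\nu(z)\big)+2\nu\le 2\nu$, which is fine for $\nu>0$, but for $\nu=0$ one has $I_{-1}=I_1$ and the identity is vacuous; in fact $z(1-I_1(z)/I_0(z))\to 1/2$ as $z\to\infty$, so no bound of the form $\lesssim|\nu|$ can hold. The correct uniform bound is $\lesssim |\nu|+1$ (or $|\nu|+\tfrac12$). This same issue is present in the lemma statement itself — the error term $3\delta|\nu|$ vanishes for $\mathsf{d}=2$ while the true error does not — so you have inherited rather than created it, but a complete proof should flag it and use $C(|\nu|\vee 1)\,\delta$ instead.
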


\begin{proof}
	By property (P), we have the explicit formula
	\begin{align*}
		\frac{p^{(\mathsf{d})}(1,y,u)}{p^{(\mathsf{d})}(1-\delta,y,u)}
		 & =
		e^{-\frac{\delta(1-\delta/2)}{u}}
		(1-\delta)^{|\nu|}
		\frac{I_{|\nu|}(y/u)}{I_{|\nu|}(y(1-\delta)/u))}
		=
		\frac{
		\big({1-\frac{\delta}{u}+ O(\frac{\delta^2}{u^2}) }\big)
		\left(1- |\nu|\delta + O(\delta^2)\right)
		}{
1
		-
		\frac{I'_{|\nu|}(y/u)}{I_{|\nu|}(y/u)} \frac{\delta y}{u}
		+
		\frac{R_{|\nu|}(y,u)}{I_{|\nu|}(y/u)}
		}.
	\end{align*}
	The desired bounds follow by using the Bessel recurrence relation
	\begin{align}\label{eq:multi-thm}
		I_{|\nu|}'(z)
		=
		I_{|\nu|+1}(z)+\frac{|\nu|}{z} I_{|\nu|}(z),
		\qquad
		I_{|\nu|}''(z)
		=
		\left(1+\frac{|\nu|(|\nu|-1)}{z^2}\right) I_{|\nu|}(z)-\frac{1}{z} I_{|\nu|+1}(z)
	\end{align}
	see e.g \cite[(10.29.2)]{DLMF} to bound error term $R_{|\nu|}(y,u)$,  using the Taylor expansion of $I_{|\nu|}$.
\end{proof}
Recall from \cref{subsec:hitting-times} that under the law of a Bessel process starting from $1-\delta$, the first hitting time $\tau$ of one is equal in distribution to $\tau^{(\delta)}$ defined by
\begin{equation*}
	\tau^{(\delta)}:=\sum_{n\ge 1} I_n^{(\delta)} E_n \,;
	\quad E_n \sim \text{Exp}\left( \frac12{j^2_{|\nu|,n}}\right) \, ;
	\quad I_n^{(\delta)}\sim \mathrm{Ber}(p_\delta)
\end{equation*}
with $I_1^{(\delta)},E_1,I_2^{(\delta)},E_2,...$ independent, $j_{|\nu|,n}$ the $n$-th zero of the Bessel function $J_{|\nu|}$, and $p_\delta:=1-(1-\delta)^2=2\delta(1-\delta/2)$.
In what follows, for compactness, we use the generic notation $\mathbb{P},\mathbb{E}$: the distribution of each random variable being considered will be clear from its symbol or the context.

\begin{lemma} \label{cor:app} 	Let $\mathsf{d}\ge2$ and $U\in (0,\infty)$.
	Then
	\begin{align*}
		\PB{1-\delta}{y}{u}{{\mathsf{d}}}(X_t\le 1 \, \forall t\in [0,u])=
		\frac{\delta(1-y)}{u} \left(
		\begin{array}{c}
				1-\frac{y}{(1-y)}\left(\frac{I_{|\nu|+1}(\frac{y}{u})}{I_{|\nu|}(\frac{y}{u})}-1\right)+
				\frac{2u}{1-y}\sum_{n\ge 1}e^{-\frac{j_{|\nu|,n}^2}{2}u}
				\\[1em]
				+
				\frac{2u}{1-y}\sum_{n\ge 1}\mathbb{E}\left[\left(1-\frac{p^{(\mathsf{d})}(1,y,u-E_n)}{p^{(\mathsf{d})}(1,y,u)}\right)\mathds{1}_{E_n\le u}\right]
				+
				\varepsilon_{\delta,u,y}
			\end{array}
		\right)
	\end{align*}
	where	for all $0<\delta,y<1$, and $\delta<u<U$,
	\begin{equation}\label{eq: bound eps dyu}
		\varepsilon_{\delta,u,y}
		\lesssim
		\delta |\log^2(\delta)|
{
		\big(\frac{1}{u(1-y)}+\frac{u^{3/2}}{(1-y)^4}\big).
		}
	\end{equation}
	Moreover, for the same set of $\delta, y, u$ we have
	\begin{equation*}\PB{1-\delta}{y}{u}{{\mathsf{d}}}(X_t\le 1 \, \forall t\in [0,u]) \lesssim \frac{\delta(1-y)}{u}.\end{equation*}
\end{lemma}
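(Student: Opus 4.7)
The plan is to reduce the bridge event to a functional of the first-passage time $\tau$ of level $1$ by the unconstrained Bessel process, and then linearise in $\delta$ via the Kent decomposition \eqref{eq:defEn}.

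By the Markov property at $\tau$ together with the Radon--Nikodym representation of the bridge as a weighted Bessel process,
\begin{equation*}
\PB{1-\delta}{y}{u}{\mathsf{d}}(X_t\le 1\,\forall t\in[0,u])
=
1 - \frac{\mathbb{E}\bigl[p^{(\mathsf{d})}(1,y,u-\tau^{(\delta)})\mathds{1}_{\tau^{(\delta)}\le u}\bigr]}{p^{(\mathsf{d})}(1-\delta,y,u)}.
\end{equation*}
Set $\rho := p^{(\mathsf{d})}(1,y,u)/p^{(\mathsf{d})}(1-\delta,y,u)$ and $\psi(s) := 1 - p^{(\mathsf{d})}(1,y,u-s)/p^{(\mathsf{d})}(1,y,u)$. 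Writing $\mathbb{E}[p^{(\mathsf{d})}(1,y,u-\tau^{(\delta)})\mathds{1}_{\tau^{(\delta)}\le u}]/p^{(\mathsf{d})}(1,y,u) = \mathbb{P}(\tau^{(\delta)}\le u) - \mathbb{E}[\psi(\tau^{(\delta)})\mathds{1}_{\tau^{(\delta)}\le u}]$, this rearranges to
\begin{equation*}
\PB{1-\delta}{y}{u}{\mathsf{d}}(X_t\le 1\,\forall t)
= (1-\rho) + \rho\,\mathbb{P}(\tau^{(\delta)} > u) + \rho\,\mathbb{E}[\psi(\tau^{(\delta)})\mathds{1}_{\tau^{(\delta)}\le u}].
\end{equation*}
The second bound of \cref{lem:a1} gives $1-\rho = \frac{\delta}{u}\bigl(1 - y\,I_{|\nu|+1}(y/u)/I_{|\nu|}(y/u)\bigr) + O(\delta^2/u^2)$; after factoring out $\delta(1-y)/u$ this produces exactly the first summand of the claimed identity.

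The main step is to linearise the two hitting-time expectations in $p_\delta = 2\delta(1-\delta/2)$. Conditioning individually on each Bernoulli $I_n^{(\delta)}$ in $\tau^{(\delta)} = \sum_n I_n^{(\delta)}E_n$ and exploiting independence gives, for any bounded $G$ with $G(0)=0$,
\begin{equation*}
\mathbb{E}[G(\tau^{(\delta)})] = p_\delta \sum_n \mathbb{E}[G(E_n)] + R_\delta(G),
\end{equation*}
where $R_\delta(G)$ collects the configurations with at least two $I_n^{(\delta)}$ equal to $1$. Applied with $G = \mathds{1}_{\cdot>u}$ and $G = \psi\cdot\mathds{1}_{\cdot\le u}$, and using $p_\delta = 2\delta + O(\delta^2)$, this recovers the two infinite series in the statement, and $\varepsilon_{\delta,u,y}$ absorbs: (i) the $O(\delta^2/u^2)$ remainder from \cref{lem:a1}; (ii) the cross terms $(1-\rho)\bigl(\mathbb{P}(\tau^{(\delta)}>u) + \mathbb{E}[\psi(\tau^{(\delta)})\mathds{1}_{\tau^{(\delta)}\le u}]\bigr)$; and (iii) the remainders $R_\delta$.

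The quantitative bound \eqref{eq: bound eps dyu} is where the real work lies. The cross terms in (ii) are controlled using $\mathbb{E}[\tau^{(\delta)}]\lesssim\delta$ together with Markov's inequality. The remainders in (iii) are formally $O(p_\delta^2)$, but coupling this with the infinite sum over Kent modes and the boundary singularity of $\psi(s)$ as $s\uparrow u$ is delicate: near $s=u$, $p^{(\mathsf{d})}(1,y,u-s)$ develops a small-time singularity whose size for $y$ close to $1$ is controlled via the derivative identities \eqref{eq:multi-thm} and the Bessel asymptotics \eqref{eq:Bessel-asymp-small-z}, and this is what produces the $(1-y)^{-4}$ dependence. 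Truncating the sum over $n$ at a cutoff of order $\delta^{-1/2}$ and using the Bessel-zero asymptotics \eqref{eq:MacMahon} produces the $|\log^2\delta|$ factor. The cruder bound $\PB{1-\delta}{y}{u}{\mathsf{d}}(X_t\le 1\,\forall t) \lesssim \delta(1-y)/u$ follows by plugging the first (pointwise) inequality of \cref{lem:a1} directly into the rearranged formula and estimating the two hitting-time expectations via the Kent decomposition with coarser control. The main obstacle is precisely the simultaneous handling of the infinite Kent sum and the boundary singularity of $\psi$, which is what forces the (slightly suboptimal) form of \eqref{eq: bound eps dyu}.
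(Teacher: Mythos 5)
Your overall route matches the paper's: reduce the bridge probability to a functional of the first-passage time $\tau^{(\delta)}$, apply \cref{lem:a1} to the density ratio, and linearise the hitting-time expectation in $\delta$ via the Kent decomposition. Two parts of the sketch, however, are off or genuinely incomplete.

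First, the linearisation step is misstated. You write $\mathbb{E}[G(\tau^{(\delta)})] = p_\delta\sum_n\mathbb{E}[G(E_n)] + R_\delta(G)$ with $R_\delta(G)$ ``collecting the configurations with at least two $I_n^{(\delta)}$ equal to $1$.'' But since $\prod_{m\ge 1}q_\delta = 0$, almost surely \emph{infinitely many} $I_n^{(\delta)}$ equal $1$, so that event has full probability and $R_\delta$ as described is not a remainder in any useful sense. What actually works (and what the paper does) is to decompose on the \emph{first} index $n$ with $I_n^{(\delta)}=1$, giving the exact identity $\mathbb{E}[G(\tau^{(\delta)})] = p_\delta\sum_n q_\delta^{n-1}\mathbb{E}[G(E_n+\tau_n^{(\delta)})]$ where $\tau_n^{(\delta)} = \sum_{k>n}I_k^{(\delta)}E_k$. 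The deviation from $2\delta\sum_n\mathbb{E}[G(E_n)]$ then comes from two sources — replacing $p_\delta q_\delta^{n-1}$ by $2\delta$, and $E_n+\tau_n^{(\delta)}$ by $E_n$ — neither of which is an $O(\delta^2)$ correction per outcome; controlling them requires quantifying the decay of $\mathbb{E}[|f_{u,y}(E_n)-1|]$ in $n$ (via $\|f'_{u,y}\|_\infty$ and $\mathbb{E}[E_n]\asymp n^{-2}$) and the exponential moments $\mathbb{E}[e^{\lambda_n\tau_n^{(\delta)}}]$, which is where the $|\log^2\delta|$ factor actually arises (the effective cutoff is near $1/(\delta|\log\delta|)$, not $\delta^{-1/2}$).

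Second, your proposed argument for the crude bound $\PB{1-\delta}{y}{u}{\mathsf{d}}(X_t\le 1\,\forall t)\lesssim\delta(1-y)/u$ does not go through. In the rearranged formula, $\rho\,\mathbb{P}(\tau^{(\delta)}>u)\asymp 2\delta\sum_n e^{-j_{|\nu|,n}^2 u/2} \asymp \delta/\sqrt{u}$, with no $(1-y)$ factor, and the first inequality of \cref{lem:a1} also leaves a $3\delta|\nu|$ term in $1-\rho$ that is not $O(\delta(1-y)/u)$. Recovering the $(1-y)$ factor requires genuine cancellation between $\mathbb{P}(\tau^{(\delta)}>u)$ and the negative part of $\mathbb{E}[\psi(\tau^{(\delta)})\mathds{1}_{\tau^{(\delta)}\le u}]$ (since $\psi(s)<0$ near $s=u$ when $y$ is close to $1$), and ``coarser control'' of the two Kent sums cannot see it. The paper avoids this by a two-stage argument: first prove the weaker bound $\PB{1-\delta}{z}{v}{\mathsf{d}}(X_t\le 1\,\forall t)\lesssim\frac{\delta}{\sqrt{v}}\bigl(1+\frac{(1-z)^2}{v}\bigr)$ (restricting to $\{\tau^{(\delta)}\le v/2\}$, where $f_{v,z}$ is controlled from below), and then condition on $X_{u/2}$ and apply it to both halves of the bridge, which is where the $(1-y)/u$ scaling is produced. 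You should adopt one of these two devices for the final claim; as written, that part of the sketch would stall.
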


\begin{proof}

	We write
	\begin{align}\label{eq:twoterms}\PB{1-\delta}{y}{u}{{\mathsf{d}}}(X_t\le 1 \, \forall t\in [0,u])
		 & = 1-\frac{p^{(\mathsf{d})}(1,y,u)}{p^{(\mathsf{d})}(1-\delta,y,u)}\mathbb{E}\left[f_{u,y}(\tau^{(\delta)}) \mathds{1}_{\tau^{(\delta)}\le u}\right]
	\end{align}
	with $f_{u,y}(x)=\frac{p^{(\mathsf{d})}(1,y,u-x)}{p^{(\mathsf{d})}(1,y,u)}$. By \cref{lem:a1}
	\begin{equation*}
		\frac{p^{(\mathsf{d})}(1,y,u)}{p^{(\mathsf{d})}(1-\delta,y,u)}
		=
		1-\frac{\delta(1-y)}{u}
		\left(1-\frac{y}{1-y}\left(\frac{I_{|\nu|+1}(\frac{y}{u})}{I_{|\nu|}(\frac{y}{u})}-1\right) + \mathsf{Error}\right)
	\end{equation*}
	where $\mathsf{Error}\lesssim\frac{\delta}{u(1-y)}$. Thus, comparing the lemma statement with \eqref{eq:twoterms}, it remains to show that
	\begin{equation}\label{eq:fyuaim}
		\mathbb{E}\left[f_{u,y}(\tau^{(\delta)}) \mathds{1}_{\tau^{(\delta)}\le u}\right]=1{-}2\delta \sum_{n\ge 1} e^{-\frac{j_{|\nu|,n}^2}{2}u}{-}2\delta\sum_{n\ge 1} \mathbb{E}\big[(1-f_{u,y}(E_n))\mathds{1}_{E_n\le u}\big]+\mathsf{Error}
	\end{equation}
	with $\mathsf{Error}\lesssim \delta^2|\log^2(\delta)|(u^{-2}+u^{1/2}(1-y)^{-3}).$ 

	By decomposing according to the first $n$ with $I_n^{(\delta)}=1$, and writing $\tau_n^{(\delta)}=\sum_{k>n} I_k^{(\delta)}E_k$, $q_\delta=(1-p_\delta)$, we have
	\begin{align*}
		 & \mathbb{E}
		\big[
		f_{u,y}(\tau^{(\delta)})
		\mathds{1}_{\{\tau^{(\delta)}\le u\}}
		\big]
		=
		p_\delta \sum_{n\ge 1}q_\delta^{n-1}
		\mathbb{E}\big[
		f_{u,y}(E_n+\tau^{(\delta)}_n)
		\mathds{1}_{E_n+\tau_n^{(\delta)}\le u}
		\big]
		\\ & =
		1-p_\delta
		\sum_{n\ge 1}q_\delta^{n-1}
		\mathbb{P}\big(E_n+\tau_n^{(\delta)} \ge u\big)
		+
		p_\delta
		\sum_{n\ge 1}q_\delta^{n-1}
		\mathbb{E}\big[ \big(
		f_{u,y}(E_n+\tau_n^{(\delta)})-1
		\big)
		\mathds{1}_{E_n+\tau_n^{(\delta)}\le u\}}
		\big].
	\end{align*}
Using the asymptotic expressions from \cref{eq:Bessel-asymp-small-z}, we can derive that
	\begin{align*}\sup_{x\in [0,u)}f_{u,y}(x)
			      & \lesssim
			      \sup_{x\in [0,u)}
			      \max \left\{
			      \frac{\sqrt{u}}{\sqrt{u-x}}e^{-\frac{(1-y)^2}{2}\left(\frac{1}{u-x}-\frac{1}{u}\right)},
			      1 \right\}
			      \lesssim
			      \max \left\{\frac{\sqrt{u}}{(1-y)}, 1 \right\} 
    \end{align*}
    and 
    \begin{align*}
		\sup_{x\in [0,u)}
			      |f'_{u,y}(x)|
			      & \lesssim
			      \sup_{x\in [0,u)}
			      \max
			      \left\{1,
			      \frac{\sqrt{u}}{\sqrt{u-x}}
			      e^{-\frac{(1-y)^2}{2}\left(\frac{1}{u-x}-\frac{1}{u}\right)}
			      \left(\frac{1}{u-x}
			      +
			      \frac{(1-y)^2}{2(u-x)^2}\right)
			      \right\} \\
		     & \lesssim
			      \max
			      \left\{
			      \frac{\sqrt{u}}{(1-y)^3},1
			      \right\}.
\end{align*}
In addition, for $\lambda <\frac12 j_{\nu,n+1}^2$,
		      $\mathbb{E}[
				      e^{\lambda \tau_n^{(\delta)}}
			      ]-1
			      =
			      \exp\big(\sum_{m>n} \log(1+p_\delta \frac{2\lambda}{j_{\nu,m}^2-2\lambda})\big)-1
			      \lesssim
			      \frac{\delta \lambda \log n}{n}$.

	With these observations, we bound
	\begin{align*}
		 & \left| \, p_\delta\sum_{n\ge 1}q_\delta^{n-1}
		\mathbb{E}\left[(f_{u,y}(E_n+\tau_n^{(\delta)})-1)\mathds{1}_{E_n+\tau_n^{(\delta)}\le u}\right]
		-
		2\delta \sum_{n\ge 1}
		\mathbb{E}\left[
			(f_{u,y}(E_n)-1)\mathds{1}_{\{E_n\le u\}}
			\right]\right|
		\\
		 &
		= \sum_{n\ge 1}\left(2\delta-p_\delta q_\delta^{n-1}\right)\,
		\mathbb{E}\left[ \left|
			(f_{u,y}(E_n)-1)\mathds{1}_{\{E_n\le u\}}
			\right| \right]
		\\
		 &
		+
		p_\delta
		\sum_{n\ge 1} q_\delta^{n-1}
		\left|
		\mathbb{E}\left[
		\left(f_{u,y}\left(E_n+\tau_n^{(\delta)}\right)-1\right)\mathds{1}_{E_n+\tau_n^{(\delta)}\le u}\right]-\mathbb{E}[(f_{u,y}(E_n)-1)\mathds{1}_{E_n\le u}]
		\right|.
\end{align*}
	The first term on the right is
	\begin{align*}
		\lesssim
		\delta \sum_{n\ge 1} \min\{\delta n,1\} \mathbb{E}[|f(E_n)-1|]
		\lesssim
		\delta \sup_{[0,u]}|f_{u,y}'| \sum_{n\ge 1} \frac{\min\{\delta n,1\} }{n^2}
		 & \lesssim \delta ^2 |\log \delta|
		\max \left\{
		\frac{\sqrt{u}}{(1-y)^3},1\right\}
	\end{align*}
	and the second is
	$	\lesssim \delta
		\sum_{n\ge 1}
		\mathbb{E}\left[e^{\lambda_n \tau_n^{(\delta)}}+\tau_n^{(\delta)}\right]
		\mathbb{E}\left[|f(E_n)-1|\right]$ for $\lambda_n:= j^2_{|\nu|,n}/2$, which in turn is
	\begin{align*}
		\lesssim
		\sup_{[0,u]}|f_{u,y}'|
		\sum_{n\ge 1} \frac{1}{n^2} \min\{n \delta \, |\log \delta|,1\}
		\lesssim \delta ^2 |\log^2 \delta| \max\left\{\frac{\sqrt{u}}{(1-y)^3},1\right\},
	\end{align*}
	In a similar manner we can show that
	\begin{align*}
		\big | \, p_\delta \sum_{n\ge 1}q_\delta^{n-1}\mathbb{P}(E_n+\tau_n^{(\delta)} \ge u) - p_\delta \sum_{n\ge 1}\mathbb{P}(E_n \ge u) \, \big| \lesssim \frac{\delta^2}{u}|\log u|
	\end{align*}
which yields \eqref{eq:twoterms} and thus the first claim of the lemma.

	For the final claim of the lemma, \eqref{eq: bound eps dyu},  we first show that for $z<1$ and $v<U$:
	\begin{equation}\label{eq:upintermediate}
		\PB{1-\delta}{z}{v}{{\mathsf{d}}}(X_t\le 1 \, \forall t\in [0,v]) \lesssim \frac{\delta}{\sqrt{v}}\left(1+\frac{(1-z)^2}{v}\right).
	\end{equation}
	To see this, we note that
	$
		\mathbb{E}[f_{v,z}(\tau^{(\delta)})\mathds{1}_{\{\tau^{(\delta)}\le v\}}]\ge \mathbb{E}[f_{v,z}(\tau^{(\delta)})\mathds{1}_{\{\tau^{(\delta)}\le v/2\}}],
	$
	and that there exists $C<\infty$ (depending only on $\mathsf{d},U$), such that on the event that $\tau^{(\delta)}\le v/2$, $f_{v,z}(\tau^{(\delta)})\ge 1-C\tau^{(\delta)}\frac{(1-z)^2}{v}$ for all $z<1$ and $v<U$. Taking expectations and using \cref{lem:a1} plus the argument from the first part of this proof - which shows that $\mathbb{P}(\tau^{(\delta)}\ge v/2)\lesssim \delta/\sqrt{v}$ - completes the proof of \eqref{eq:upintermediate}.

	Now to bound $\PB{1-\delta}{y}{u}{\mathsf{d}}(X_t \le 1, \forall t\in [0,u])$ we condition on the value at time $u/2$ to obtain
	\[
		\PB{1-\delta}{y}{u}{\mathsf{d}}(X_t \le 1 \forall t\in [0,u]) \lesssim \frac{\delta(1-y)}{u}\EB{1-\delta}{y}{u}{\mathsf{d}}\left(\frac{2(1-X_{u/2})^2}{u}\right).
	\]
	It is straightforward to see that the expectation on the right is $\lesssim 1$, by scaling and using simple bounds for the Radon--Nikodym derivative between $X_{u/2}$ under the law of $\PB{1-\delta}{y}{u}{\mathsf{d}}$ and under the (non-bridge) Bessel law started from $1-\delta$.
\end{proof}

From \cref{cor:app}, we easily obtain \cref{lem:step2}.

\begin{proof}[Proof of \cref{lem:step2}]
	This follows from \cref{cor:app} with $\delta=j/(a+bT)$, $u_T={T}/{a(a+bT)}$ and $y=x/a$,
	{rewriting
	\[ \frac{2u}{1-y}\sum_{n\ge 1} e^{-\frac{j_{|\nu|,n}^2}{2}u}+\frac{2u}{1-y}\sum_{n\ge 1} \mathbb{E}\big[(1-f_{u,y}(E_n))\mathds{1}_{E_n\le u}\big]=\frac{2u}{1-y}\sum_{n\ge 1} \big(1-\mathbb{E}\big[f_{u,y}(E_n)\mathds{1}_{E_n\le u}\big]\big)\]
	and also bounding the error created by replacing $u_T$ with $u=1/ab$.}
	(This last step follows from the same arguments used in the proof of \cref{cor:app}, using that $u_T\in [u-\frac{1}{b^2T},u]$, so we leave the details to the reader.)
\end{proof}

\paragraph{\bf Step (3)} Finally, we turn to the proof of \cref{lem:step3}
\begin{proof}[Proof of \cref{lem:step3}]
	For the proof, we fix $K\subset (0,\infty)$ compact: whenever we use the notation $O(f(a))$, we mean a term that is bounded in modulus by a constant times $f(a)$, uniformly over $b\in K$, $x\in [0,a]$, and $a\ge 1$ (say). We analyse the series
	\begin{align*}
		\frac{2}{b(a-x)}\sum_{n\ge 1}
		\left(
		1-\mathbb{E} \big[
			\frac{p^{(\mathsf{d})}(1,\frac{x}{a},\frac{1}{ab}-E_n)}{p^{(\mathsf{d})}(1,\frac{x}{a},\frac{1}{ab})}
			\mathds{1}_{E_n\le \frac{1}{ab}}\big]\right)= \frac{C_{a,b,x}}{a-x}+\frac{C'_{a,b}}{a-x}
	\end{align*}
	where
\begin{equation*}\label{eq:value-constant-C}
		C_{a,b,x}
		:=
		\frac{2 }{b}\sum_{n\ge 1}\left(
\frac{j_{|\nu|,n}^2}{2}
		\int_0^{1/ab} e^{-\frac{j_{|\nu|,n}^2}{2}y}
		\frac{p^{(\mathsf{d})}\left(1,\frac{x}{a}; \frac{1}{ab}\right)-p^{(\mathsf{d})}\left(1,\frac{x}{a}; \frac{1}{ab}-y\right)
		}{
		p^{(\mathsf{d})}\left(1,\frac{x}{a}; \frac{1}{ab}\right)}\right)  dy
	\end{equation*}
	and for $f(y):=e^{-\frac{y^2}{2b}}$, applying \cref{lem:weak-convergence},
\begin{align} \label{eq:c1-conv}
		{C'_{a,b}}=\frac{2}{b}\sum_{n\ge 1}e^{-\frac{j_{|\nu|,n}^2}{2ab}}
		=
		\frac{2\sqrt{a}}{b}
		\int_0^\infty
		f
		d\mu^{(a)}
		=
		\sqrt{\frac{2a }{\pi b }}
		+ O\left(\frac{1}{\sqrt{a} }\right).
	\end{align}

We will show that 
    	\begin{align} \label{eq:c2-conv}
		{C_{a,b,x}}=(a-x)
		-\sqrt{\frac{2a }{\pi b }}
		+ O\left(\frac{1}{\sqrt{a} }\right) \max(a-x,1)
	\end{align}
    which, together with \eqref{eq:c1-conv}, completes the proof.

    To show \eqref{eq:c2-conv} we substitute $w=aby/(1-aby)$ and let$\hat{f}(z)=\frac{z^2}{2}e^{-z^2/2}$ to rewrite
    \begin{align}\label{eq:c2-abx}
		C_{a,b,x}
		 & =
		{2}\sqrt{\frac{a}{b}}\sum_{n\ge 1}
		\int_0^{\infty} \frac{\hat{f}\left(j_{|\nu|,n}/\sqrt{\eta(a,b,w)}\right)}{\sqrt{\eta(a,b,w)}}\, g(a,b,x,w) dw 
        \end{align}
        where $\eta(a,b,w)=\frac{ab(1+w)}{w}\ge ab\to \infty$ as $a\to \infty$ and
        \begin{align*}
        g(a,b,x,w) & =\frac{1}{w^{3/2}\sqrt{1+w}}
		\left( 1-\left(1+w\right){e^{-\frac{abw}{2}\left(1+\frac{x^2}{a^2}\right)}}\frac{I_{|\nu|}\left(xb\left(1+w\right)\right)}{I_{|\nu|}(xb)}\right)\nonumber \\
       & =\frac{1}{w^{3/2}\sqrt{1+w}}
		\left( 1-\sqrt{1+w} \, {e^{-\frac{bw}{2a}(a-x)^2}}\right) + \frac{{e^{-\frac{bw}{2a}(a-x)^2}}}{w^{3/2}}R(b,x,w)\nonumber \\
 \text{ with }
    R(b,x,w) & =1-\sqrt{1+w}e^{-bwx}\frac{I_{|\nu|}\left(xb\left(1+w\right)\right)}{I_{|\nu|}(xb)}.
    \end{align*}
    It is straightforward to verify that for $C<\infty$ (depending only on $\mathsf{d},K$)  we have 
    \begin{equation}\label{eq:boundR}|R(b,x,w)|\le \frac{C}{\max(bx,1)}\min\{w,1\}
    \end{equation}   
    for all {$w,x\ge 0, b\in K$}. Moreover, by \cref{lem:weak-convergence}
    \[ 
    \sum_{n\ge 1} \frac{\hat{f}\left(j_{|\nu|,n}/\sqrt{\eta(a,b,w)}\right)}{\sqrt{\eta(a,b,w)}}= \frac1{2\sqrt{2\pi}}+O\left(\frac{1}{a}\right).
    \]
   Using \eqref{eq:boundR}, it is clear that $g(a,b,x,w)$ is integrable over $w$, so we can use Fubini to bring the sum inside the integral. We deduce that
   \[
   C_{a,b,x} = \sqrt{\frac{a}{2\pi b}}\int_0^\infty \left(1+O\left(\frac{1}{a}\right)\right)\left( \frac{1-\sqrt{1+w} \, {e^{-\frac{bw}{2a}(a-x)^2}}}{w^{3/2}\sqrt{1+w}}
		 + \frac{{e^{-\frac{bw}{2a}(a-x)^2}}}{w^{3/2}}R(b,x,w)\right) dw .
   \]
   Now, we can explicitly calculate 
   \begin{equation}\label{eq:explicit1}
\frac{\sqrt{a}}{\sqrt{2\pi b}} \int_0^\infty \frac{dw}{w^{3/2}\sqrt{1+w}}
		\left( 1-\sqrt{1+w} \, {e^{-\frac{bw}{2a}(a-x)^2}}\right)dw = -\sqrt{\frac{2a}{\pi b}}+(a-x).
   \end{equation}
It is also straightforward to verify, using \eqref{eq:boundR}, that 
 \begin{equation}\label{eq:boundR2}
  \frac{\sqrt{a}}{\sqrt{2\pi b}} \int_0^\infty \left(1+O\left(\frac{1}{a}\right)\right)\frac{{e^{-\frac{bw}{2a}(a-x)^2}}}{w^{3/2}}|R(b,x,w)| dw =O\left(\frac{1}{\sqrt{a}}\right) \max(a-x,1) .
   \end{equation}
  Indeed, the left hand side is bounded for all $b\in K$ and $x\in [a/2,a]$ by  $C\sqrt{a}x^{-1}\int_0^\infty \frac{\min(w,1)}{w^{3/2}}\newline=C'/\sqrt{a}$, with $C,C'$ depending only on $K,\mathsf{d}$. On the other hand, for $x\in [0,a/2]$, the left hand side is bounded by $C \sqrt{a} \int_0^\infty \min(w^{-1/2},w^{-3/2}) e^{-\frac{bw}{2a}(a-x)}$ which is in turn less than $C'\sqrt{a-x}\le \sqrt{2}C'(a-x)/{\sqrt{a}}$. Again $C,C'$ do not depend on $x\in [0,a/2]$, $a\ge 1$ or $b\in K$.
Inserting \eqref{eq:explicit1} and \eqref{eq:boundR2} into the expression for $C_{a,b,x}$ completes the proof of \eqref{eq:c2-conv}, and; therefore, the proof of the lemma.
\end{proof}
 
\section{Proof of \cref{thm:concave-bessel}}\label{sec:proof-concave}
We now turn to the proof of \cref{thm:concave-bessel}, which is inspired by the proof of \cite[Equation (11)]{bramson2016convergence}. 
Remember that $h$ is positive, concave and $h(t) \le c t^{\gamma}$ for some $\gamma < 1/6$. 
Recalling the notation $\Omega=\Omega_{a,b,T}$ for the event that $X$ stays below $a+bt$ on $[0,T]$, we further consider the event
\begin{equation*}
	\Omega^h
	=
	\Omega^h_{a,b,T}
	:=
	\left\{X_t\le a +bt + h(t_T)\quad \forall t\in[0,T] \right\}.
\end{equation*}
We want to show that for $a,b,j>0$ and $x\in [0,a)$
\begin{align*}
		\limsup_{T\to \infty} T\PB{x}{a+bT-j}{T}{d}(\Omega_{a,b,T}^h)
		\le
		j(a-x)\big(\mathcal{P}_{a,b,x}+\frac{\widehat{\mathcal{P}}_b(x)}{a-x}\big)
		(1+\delta_{a,b,j,x}),
	\end{align*}
	with $\delta_{a,b,x,j}<\infty$ for fixed $a,b,x,j$ and  for fixed $x,b$ we have that  $\delta_{a,b,j,x} \to 0$ as $a,j\to \infty$.

Clearly we have $\Omega  \subset \Omega^{h}$, so we aim to control the probability of $\Omega^{h}\setminus \Omega$.
Letting $Y_t:=X_t-bt$, the main idea of the proof is to decompose the event $\Omega^{h}\setminus \Omega$ into separate events for path of $X$ before and after time
\begin{equation*}
	\tau:= \inf \big\{t \in [0,T]: Y_t = \sup\nolimits_{s \in [0,T]} Y_s \big\},
\end{equation*}
whose probabilities can be controlled using \cref{thm:linear-bessel} after conditioning on $\tau, X_\tau$.

Let $\mathtt{Bulk}=\mathtt{Bulk}_{a,x,j,T} := [(a-x)^{1/2},T-j^{1/2}]$, which is simply a region far from the end points of $[0,T]$.
(The value ${1/2}$ is not special, we merely choose a small power).
Trivially, we have that
\begin{align*}
	 & \PB{x}{a+bT-j}{T}{d}{(\Omega)}
	\le
	\PB{x}{a+bT-j}{T}{d}{(\Omega^h)}
	\\&  =
	\PB{x}{a+bT-j}{T}{d}(\Omega)
	+
	\PB{x}{a+bT-j}{T}{d}(\Omega^h\setminus \Omega, \tau \not \in \mathtt{Bulk})
	+
	\PB{x}{a+bT-j}{T}{d}(\Omega^h\setminus \Omega, \tau  \in \mathtt{Bulk}).
\end{align*}

Using \cref{thm:linear-bessel}, and since we always keep $x,b$ fixed, \cref{thm:concave-bessel} follows from the following lemma. 
\begin{lemma} \label{lem:c-bulk-bdy}
	Under the conditions of \cref{thm:concave-bessel}, we have
	\begin{enumerate}
		\item[(i)] \label{lem:c-bulk}
		      $
			      \mathcal{L}^{\mathtt{Bulk}}_{a,b,j,x}
			      :=
				  {\displaystyle \limsup_{T \to \infty}}
			      T
			      \PB{x}{a+bT-j}{T}{d}(\Omega^h\setminus \Omega, \tau  \in \mathtt{Bulk})
			      <\infty$
		      {\&} $
				  {\displaystyle\lim_{ a,j \to \infty}}
			      \frac{\mathcal{L}^{\mathtt{Bulk}}_{a,b,j,x}}{aj}
			      =
			      0.
		      $

		\item [(ii)]\label{lem:c-bulk-c}
		      $
			      \mathcal{L}^{(\mathtt{Bulk})^c}_{a,b,j,x}
			      :=
				  {\displaystyle \limsup_{T \to \infty}}
			      T
			      \PB{x}{a+bT-j}{T}{d}(\Omega^h\setminus \Omega, \tau \not \in \mathtt{Bulk})
			      <\infty$
		      {\&} $
				  {\displaystyle\lim_{ a,j \to \infty}}
			      \frac{\mathcal{L}^{\mathtt{(Bulk)^c}}_{a,b,j,x}}{aj}
			      =
			      0.
		      $
	\end{enumerate}
\end{lemma}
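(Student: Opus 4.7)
The strategy for both parts of the lemma is to decompose the event $\Omega^h \setminus \Omega$ using the Markov property of the Bessel bridge at carefully chosen times, reducing each piece to a bridge probability with a linear barrier to which \cref{prop:non-sharp} applies.

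For part (i), I would discretise time by choosing integer representatives $r \in \mathbb{Z} \cap \mathtt{Bulk}$ and integer height values $k \in \{0, 1, \ldots, \lceil h(r_T) \rceil\}$, so that the event $\Omega^h \setminus \Omega \cap \{\tau \in \mathtt{Bulk}\}$ is covered by a union over $(r, k)$ of events of the form $\Omega^h \cap \{X_r \in (a + br + k, a + br + k + 1]\}$. For each $(r, k)$, conditioning on $X_r$ and using the Markov property of the bridge gives
\begin{equation*}
\PB{x}{a+bT-j}{T}{d}\bigl(\Omega^h, X_r \in (a+br+k, a+br+k+1]\bigr) \le \int_{a+br+k}^{a+br+k+1} P_L(y) P_R(y) f_{X_r}(y)\, dy,
\end{equation*}
where $P_L(y), P_R(y)$ are the bridge probabilities on $[0, r]$ and $[r, T]$ of staying below $a + bt + h(t_T)$, and $f_{X_r}(y)$ is the bridge density of $X_r$ at $y$, expressible via property (P). Using the concavity of $h$ to replace $h(t_T)$ by a linear upper bound (essentially a tangent line at $r_T$) on each half, \cref{prop:non-sharp} then bounds $P_L(y) \lesssim (h(r_T) - k)(a + h(r_T) - x + 1)/r$ and $P_R(y) \lesssim (h(r_T) + j)(h(r_T) - k + 1)/(T - r)$. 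Combining these with Bessel estimates on $f_{X_r}(y)$ (which exhibits Gaussian-like decay away from its mean $\approx br$ in the bridge), summing over $(r, k)$, and using $h(t) \le c t^\gamma$ with $\gamma < 1/6$ to ensure convergence of the resulting sums, yields the required control on $\mathcal{L}^{\mathtt{Bulk}}_{a,b,j,x}$.

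For part (ii), by the time-reversal property (R) it suffices to treat $\tau \in [0, (a-x)^{1/2}]$ (the case $\tau \in [T - j^{1/2}, T]$ being symmetric). In this short-time regime $h(\tau_T) \le c(a-x)^{\gamma/2}$ is small, and the bridge starting from $x < a$ must rise above $a + b\tau$ within time $\tau \le (a-x)^{1/2}$; short-time estimates obtained directly from the Bessel density (P) make this an event whose probability decays rapidly in $a - x$. Multiplying by the bound from \cref{prop:non-sharp} applied to the long remaining bridge on $[\tau, T]$ gives the desired bound, which in particular is $o(aj)$ as $a, j \to \infty$.

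The main technical obstacle lies in part (i): the concave perturbation $h(t_T)$ contributes to the barrier across all of $[0, T]$, and the interplay between its polynomial growth, the Bessel bridge transition density, and the \cref{prop:non-sharp} estimates must be balanced carefully to yield an $O(1/T)$ bound. The restriction $\gamma < 1/6$ is precisely what is needed to ensure the relevant sums over $r \in \mathtt{Bulk}$ converge with the correct $T$-dependence and produce the $o(aj)$ behaviour as $a, j \to \infty$.
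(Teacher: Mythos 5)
Your covering claim in part (i) is incorrect, and this is a genuine gap. On $\Omega^h\setminus\Omega$ with $\tau\in(r,r+1)$ non-integer, the value $X_r$ at the nearest integer time can be \emph{strictly below} $a+br$: the bridge may exceed the linear barrier only briefly inside a unit interval without doing so at either endpoint. So $\{\Omega^h\setminus\Omega,\ \tau\in\mathtt{Bulk}\}$ is \emph{not} contained in $\bigcup_{r,\,k\ge 0}\Omega^h\cap\{X_r\in(a+br+k,a+br+k+1]\}$. Extending $k$ to negative integers does not close this by itself either, because you would then need quantitative control on the joint probability that $X_r$ is well below $a+br$ while $\sup_{s\in[r,r+1]}(X_s-bs)>a$ — this is exactly the modulus-of-continuity input your argument is missing. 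The paper instead conditions on $(X_k, M_k, X_{k+1})$ where $M_k:=\sup_{s\in[k,k+1]}(X_s-bs)$, so that on the event $\tau\in[k,k+1]$ the value $M_k$ is the \emph{global} supremum; the two bridge pieces on $[0,k]$ and $[k+1,T]$ then automatically stay below the constant-slope linear barrier $bs+M_k$, and \cref{prop:non-sharp} applies directly with $j$-parameters $M_k-(X_k-bk)\le Z_k$ (and similarly for the right). The Gaussian tail of the oscillation $Z_k$ is what makes the resulting sums converge. Relatedly, your tangent-line replacement of $h$ is unnecessary: concavity/monotonicity of $h$ is only used via $h(\tau_T)\le h(k+1)$ to pin $M_k$ into the thin interval $[a,a+h(k+1))$, and the barrier for the two sub-bridges is already linear (namely $bs+M_k$) by construction.

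Your sketch for part (ii) is in the same spirit as the paper's (condition near the edge, exploit that the target interval for $M_k$ has width $\approx h((a-x)^{1/2})$, and control the long remaining bridge with \cref{prop:non-sharp}). The time-reversal shortcut for $\mathtt{E}^2=[T-j^{1/2},T]$ should be stated carefully since $a,x$ and $a+bT-j$ swap roles under reversal, but this is a minor point the paper itself leaves informal.
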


Throughout this proof, the omitted constants in $\lesssim$ depend only in $d,K,x$ and $\gamma$. 

\begin{proof}[Proof of \cref{lem:c-bulk-bdy}(i)]
	We prove this lemma by evaluating $\bb P(\Omega^h\setminus \Omega, \tau\in [k,k+1])$ for appropriate $k$'s, then summing over its possible values.
	To do so, it will be useful to introduce 
	\begin{align}\label{eq:defs-MkmkZk}
		M_k :=& \sup_{s\in [k,k+1]} (X_s-bs), \quad 
		 m_k  :=\inf_{s\in [k,k+1]} (X_s-bs), \nonumber \\
		\text{ and } & Z_k :=\sup_{s,s'\in [k,k+1]} |(X_s-bs) - (X_s-bs)|.
	\end{align}
	For $k+1\le T/2$ such that $[k,k+1]\cap \mathtt{Bulk}\ne \emptyset$, it follows from the definition that
	\begin{multline}
		\{\Omega^h\setminus \Omega, \tau\in [k,k+1]\}
		\subset \\
		\{M_k\in [a,a+h(k+1))\}
		\cap
		\{X_s\le bs+M_k \, \forall s\le k\}
		\cap \{X_s\le bs+M_k \, \forall s\in [k+1,T]\}
	\end{multline}
	We will calculate the probability of the event {$E$} on the right-hand side under $\PB{x}{a+bT-j}{T}{d}$ conditioned on $X_k,X_{k+1},M_k$.
	Thanks to the Markov property, conditioned on $X_k,X_{k+1},M_k$, the processes  $\{X_t, t \in [0,k]\},\{X_t, t \in [k+1,T]\}$ are respectively distributed according to $\PB{x}{X_k}{k}{d},\PB{X_{k+1}}{a+bT-j}{T-k-1}{d}$. 
	Hence,
\begin{align}\label{eq:RHS-cond}
		&  \PB{x}{a+bT-j}{T}{d}(E| X_k,M_k,X_{k+1})\\ \nonumber
        & \le
		\mathds{1}_{[M_k\in [a,a+h(k+1)]]}
		\PB{x}{X_k}{k}{d}(\Omega_{M_k,b,k})
		\PB{X_{k+1}}{a+bT-j}{T-k-1}{d}(\Omega_{M_k+b(k+1),b,T-k-1}).
	\end{align}
	Before integrating over $X_k,X_{k+1},M_k$, let us further evaluate each of these terms. 
We write $X_k = M_k + bk - (M_k-(X_k-bk))$, and apply \cref{eq:thm-non-sharp-bound} to both terms, to get
\begin{align}
		\label{eq:bulk-bessel-part-1}
		\PB{x}{X_k}{k}{d}(\Omega_{M_k,b,k})
		 & \lesssim
		\frac{Z_k(M_k-x+1)}{k}, \text{ and}
		\\
		\label{eq:bulk-bessel-part-2}
		\PB{X_{k+1}}{a+bT-j}{T-k-1}{d}(\Omega_{M_k+b(k+1),b,T-k-1})
		 & \lesssim
		\frac{ (j+M_k-a) (Z_k+1) }{T-k-1}.
	\end{align}

	Using that $Z_k$ has Gaussian tails and the explicit law of $X_k$, we can integrate over $ X_k,M_k,$ and $X_{k+1}$ to obtain that 
	\begin{align*}
		\PB{x}{a+bT-j}{T}{d}(E) \lesssim \sum_{k=\sqrt{a-x}}^{T/2}
		\sum_{\ell=0}^{\infty}
		& (\ell+1)^2
		  \frac{(a-x + h(k+1) +1)}{k}\frac{j+h(k+1) }{T-k-1}
		\\
		 & \times \PB{x}{a+bT-j}{T}{d}(Z_k \in [\ell,\ell+1],
		M_k \in [a,a+h(k+1) ]
		)
	\\
		\lesssim
		\frac{((a-x) \vee 1)(j \vee 1)}
		{T}
		\sum_{k=\sqrt{a-x}}^{T/2}
		\sum_{\ell=1}^{\infty}
		 & (\ell+1)^2
		k^{2\gamma-1}                                                                              \\
		 & \times \PB{x}{a+bT-j}{T}{d}\left(X_k \in [a+bk,a+bk+h(k)],Z_k \in [\ell-1,\ell]\right).
	\end{align*}
Finally using that $0<h(t)< c t^{\gamma}$ with $\gamma<1/6$ (by assumption) and choosing $p > 1$ such that $\gamma(2+1/p)<1/(2p)$, we use the H\"older inequality to bound this by a $p,\gamma$ dependent constant times
	\begin{align*}
		\frac{((a-x) \vee 1)(j \vee 1)}{T}
		\sum_{k=\sqrt{a-x}}^{\infty}
		k^{\gamma(2+1/p)-(1+1/(2p))}
		\lesssim
		\frac{((a-x) \vee 1)(j \vee 1)}{T}
		(a-x)^{-\varepsilon_{p,\gamma}}
	\end{align*}
	with $\varepsilon_{p,\gamma}>0$. We proceed in a similar manner to sum over $k$ between $\lfloor T/2 \rfloor$ and $\lfloor T-j^{1/2} \rfloor$, and get
	\begin{equation*}
		\mathcal{L}_{a,j,b,x}^{\mathtt{Bulk}}
		\lesssim
j (a-x)^{1-\varepsilon_{p,\gamma}}
		+
		j^{1-\varepsilon_{p,\gamma}} (a-x),
	\end{equation*}
	concluding \cref{lem:c-bulk-bdy}(i).
\end{proof}

\begin{proof}[Proof of \cref{lem:c-bulk-bdy}(ii)]
	We will have to break this into two extra cases, for which we will consider the following edge intervals
\begin{align*}
		\mathtt{E}^1=\mathtt{E}^1_{a,x,T} := \left[0,(a-x)^{1/2}\right], &
		\quad
		\mathtt{E}^2=\mathtt{E}^2_{j,T}  := \left[T-j^{1/2},T\right].
	\end{align*}
	For $i=1,2$, let
	$
		\mathcal{L}^{i}_{a,b,j,x}
		:=
		\limsup_{T \to \infty}
		T
		\PB{x}{a+bT-j}{T}{d}(\Omega^h\setminus \Omega, \tau \in \mathtt{E}^i).
		$
		We claim that  $\mathcal{L}^{i}_{a,b,j,x}$ is finite and 
		$
		\lim_{\substack{ a,j \to \infty }}
		\mathcal{L}^{i}_{a,b,j,x}/(aj) = 0 $
	 for $i=1,2.$
	Once again, we consider $M_k,m_k,Z_k$ as defined in \cref{eq:defs-MkmkZk}.
	For $[k,k+1] \cap \mathtt{E}^1 \neq \emptyset$, we have that
\begin{align*}
		\PB{x}{a+bT-j}{T}{d}
		 &
		\left(\Omega^h\setminus \Omega, \tau \in [k,k+1] \mid M_{k},  X_{k+1},Z_k\right)
		\\&  \le
		\mathds{I}\left[M_k \in \left(a,a+h_T\left( (a-x)^{1/2}\right)\right)\right]
		\times
		\PB{X_{k+1}}{a+bT-j}{T-k-1}{d}
		\left(\Omega_{M_k+b(k+1),b,T-k-1 }\right)
		\\&  \lesssim
		\mathds{I}\left[M_k \in \left(a,a+h_T\left( (a-x)^{1/2}\right)\right)\right]
		\times
		\frac{Z_k \left(j+h_T\left(\left(a-x\right)^{1/2}\right) \right)}{T}
	\end{align*}
where in the last inequality, we used \cref{eq:bulk-bessel-part-2}.
	Summing over $k \in \mathtt{E}^1$ and taking the expectations over $M_k,X_{k+1},Z_k$, we have
\begin{align*}
		 &
		\sum_{k \in \mathtt{E}^1}
		\PB{x}{a+bT-j}{T}{d}
		\left(\Omega^h\setminus \Omega, \tau \in [k,k+1] \right)
		\\&  \lesssim
		\frac{\left(j+h_T\left(\left(a-x\right)^{1/2}\right) \right)}{T}
		\sum_{k \in \mathtt{E}^1}
		\EB{x}{a+bT-j}{T}{d}
		\left[(Z_k+1) \mathds{I}_{\left[M_k \in \left(a,a+h_T\left( (a-x)^{1/2}\right)\right)\right]} \right]
		\\&  \lesssim
		\frac{(j \vee 1)}{T}
		\left(1 \vee h_T\left(  \left(a-x\right)^{1/2} \right)\right)^{1/2}
		(a-x)^{1/2}
		\lesssim
		\frac{(j \vee 1)}{T} 1 \vee(a-x)^{8/10},
	\end{align*}
	where in the second last inequality we used again that $Z_k$ has Gaussian tails.
	With this, we obtain the desired estimates for $\mathtt{E}^1$.
	The contribution of $\mathtt{E}^2$ is estimated similarly and is left to the reader.
\end{proof}

\bibliographystyle{abbrv}
\bibliography{library}
\end{document}